\documentclass[11pt]{amsart}

\usepackage{amsmath,amsfonts,amsthm,bbm, amssymb}
\usepackage[cmtip, all]{xy}
\usepackage[letterpaper, left=3cm, right=3cm, top = 2.5cm, bottom = 2.5cm ]{geometry}
\usepackage{comment}
%\usepackage{refcheck}

%\setlength{\textwidth}{5.8in}             % Space saving measures
%\setlength{\textheight}{9.0in}
%\setlength{\topmargin}{-0.1in}
%\renewcommand{\baselinestretch}{1}
%\setlength{\oddsidemargin}{.25in}
%\setlength{\evensidemargin}{.25in}

%Remove in the final version
%\usepackage[color, notref, notcite]{showkeys}     % refs and labels
%\definecolor{refkey}{gray}{.5}   % graylevel for refs
%\definecolor{labelkey}{gray}{.5} % graylevel for labels
%\definecolor{Red}{rgb}{1,0,0}

\newtheorem{thm}[equation]{Theorem}
\makeatletter
\let\c@subsection\c@equation
\makeatother
\newtheorem{prop}[equation]{Proposition}
\newtheorem{lem}[equation]{Lemma} 
\newtheorem{cor}[equation]{Corollary}

\theoremstyle{definition}

\newtheorem{defn}[equation]{Definition}
\theoremstyle{remark}
\newtheorem{remk}[equation]{Remark}
\newtheorem{remks}[equation]{Remarks}

\newtheorem{exm}[equation]{Example}
\newtheorem{exms}[equation]{Examples}
\newtheorem{notat}[equation]{Notation}
\numberwithin{equation}{section}

{\hfill$\square$\end{defn}}
{\hfill$\square$\end{remk}}
{\hfill$\square$\end{remks}}
{\hfill$\square$\end{exm}}
{\hfill$\square$\end{exms}}
{\hfill$\square$\end{notat}}

\newcommand{\thmref}{Theorem~\ref}
\newcommand{\propref}{Proposition~\ref}
\newcommand{\corref}{Corollary~\ref}

\newcommand{\lemref}{Lemma~\ref}

\newcommand{\sC}{{\mathcal C}}

\newcommand{\sF}{{\mathcal F}}

\newcommand{\sH}{{\mathcal H}}

\newcommand{\sO}{{\mathcal O}}

\newcommand{\sV}{{\mathcal V}}

\newcommand{\sZ}{{\mathcal Z}}

\newcommand{\A}{{\mathbb A}}

\newcommand{\C}{{\mathbb C}}

\newcommand{\G}{{\mathbb G}}

\renewcommand{\P}{{\mathbb P}}
\newcommand{\Q}{{\mathbb Q}}

\newcommand{\Z}{{\mathbb Z}}

\newcommand{\CH}{{\rm CH}}

\newcommand{\surj}{\twoheadrightarrow}
\newcommand{\inj}{\hookrightarrow}

\newcommand{\Spec}{{\rm Spec \,}}

\newcommand{\Sch}{{\operatorname{\mathbf{Sch}}}}

\renewcommand{\>}{\rangle}

\newcommand{\Sm}{{\mathbf{Sm}}}

\newcommand{\Ab}{{\mathbf{Ab}}}

\newcommand{\ds}{{/\kern-3pt/}}

\newcommand{\ov}{\overline}

\renewcommand{\dim}{\text{\rm dim}}

\newcommand{\tuborg}{\left\{\begin{array}{ll}}
\newcommand{\sluttuborg}{\end{array}\right.}

\newcommand{\wt}{\widetilde}
\newcommand{\wh}{\widehat}

%%%%%%%%%%%MOTIVIC

%%%%%%%%%%%TRIANGULATED

  %usage: $\ZDe{X}{2}$ produces weight 2 modified D-complex on X

\newcounter{elno}

\newcounter{elno-abc}   

\newcounter{elno-abc-prime}

\begin{document}
\title[Equivariant $K$-theory and Higher Chow Groups]
{Equivariant $K$-theory and Higher Chow Groups of schemes}
\author{Amalendu Krishna}
\address{School of Mathematics, Tata Institute of Fundamental Research,  
1 Homi Bhabha Road, Colaba, Mumbai, India.}
\email{amal@math.tifr.res.in}

\keywords{Equivariant $K$-theory, Higher Chow groups, Algebraic groups}

\subjclass[2010]{Primary 14C40, 14C35; Secondary 14C25}

\begin{abstract}
For a smooth quasi-projective scheme $X$ over a field $k$
with an action of a reductive group, we 
establish a spectral sequence connecting the equivariant and the 
ordinary higher Chow groups of $X$. For $X$ smooth and projective, 
we show that this spectral sequence degenerates, leading to an explicit
relation between the equivariant and the ordinary higher Chow groups. 
We obtain several applications to algebraic $K$-theory. 

We show that for a reductive 
group $G$ acting on a smooth projective scheme $X$, 
the forgetful map $K^G_i(X) \to K_i(X)$ induces an isomorphism
$K^G_i(X)/{I_G K^G_i(X)} \xrightarrow{\simeq} K_i(X)$ with rational 
coefficients. This generalizes a result of Graham to higher $K$-theory of 
such schemes. We prove an equivariant Riemann-Roch theorem, leading to 
a generalization of a result of Edidin and Graham to higher $K$-theory. 
Similar techniques are used to prove the equivariant Quillen-Lichtenbaum 
conjecture.
\end{abstract} 
\setcounter{tocdepth}{1}
\maketitle
  
\tableofcontents

\section{Introduction}\label{sec:Intro}
Based on the ideas of Morel-Voevodsky \cite{MV} and
Totaro \cite{Totaro} for the Borel construction of the classifying spaces
of linear algebraic groups, the theory of equivariant higher Chow groups 
was formalized in the present form in \cite{ED2}.
Totaro studied the Chow ring of the algebraic classifying spaces of linear 
algebraic groups in \cite{Totaro}.
His model for the classifying space was also defined independently by 
Morel-Voevodsky \cite{MV} to study the motivic cohomology of 
infinite dimensional
schemes (represented in the $\A^1$-homotopy category as motivic sheaves). 
The Borel style equivariant motivic cohomology groups were 
used by Voevodsky \cite{Voev-1} (without highlighting their
formal definition) in his work on the reduced power operations in motivic
cohomology.  

There has been a considerable amount of work in recent past to understand the 
equivariant Chow groups and to relate them with the equivariant $K$-theory. 
One of the motivations in the study of equivariant
higher Chow groups is the study of their relation with the ordinary 
(non-equivariant) higher Chow groups of schemes. 
In particular, one would like to know if
there are tools which can be used to study one in terms of the other.

It turns out that compared to their non-equivariant companions,
the equivariant $K$-theory or the Chow groups
are often easier to compute 
(see, for instance, \cite{Brion} and \cite{Krishna1}).
Hence, any formula relating these two would be helpful in the study of the 
ordinary higher $K$-theory and Chow groups of schemes with group action. 
The goal of this paper is to prove some results in this direction
and provide several important applications to higher 
$K$-theory.

\subsection{The main results}\label{sect:MR*}
Let $k$ be a field.
Let $G$ be a smooth linear algebraic group over $k$ which acts linearly 
(see \S~\ref{sec:Note}) on
an equi-dimensional quasi-projective $k$-scheme $X$. For $i \ge 0$,
let $\CH^*_G(X,i) = {\underset{j \ge 0}\oplus} \ \CH^j_G(X,i)$ denote  
the equivariant higher Chow groups of $X$.
These groups and their basic functorial properties
are recalled in \S~\ref{sect:Recall} of this text.
The ring $S(G) := \CH^*_G(\Spec(k),0)$ is called the Chow ring of the
classifying space of $G$. This is a $\Z$-algebra with an augmentation
$S(G) \surj \Z$. One of the properties of equivariant Chow groups
is the forgetful map $\CH^*_G(X,i) \to \CH^*(X,i)$ from the equivariant to the 
ordinary higher Chow groups (see ~\eqref{eqn:forgetful}).

For a split reductive group $G$ over $k$, we write $t_G$ for its
torsion index \cite{Groth}. The torsion index is a positive integer, 
which is equal to one if $G$ is special in the sense of Grothendieck--Serre, 
i.e., all \'etale locally trivial $G$-torsors are Zariski locally trivial.
The special reductive groups over algebraically closed fields were
classified by Grothendieck and Serre. Over arbitrary fields,
these groups were recently classified by Huruguen \cite{Huru}.

The main results of this text can be stated as follows.
The underlying notations and definitions can be found in the body of the
text.

\begin{thm}\label{thm:SS-Red-Main-I}
Let $G$ be a split reductive group acting on a smooth
quasi-projective scheme $X$ over $k$. 
Then there is a convergent homological spectral sequence 
\begin{equation}\label{eqn:SS-M-01} 
E^2_{p,q} = {\rm Tor}^{S(G)}_p({\Z}, \CH^*_G(X, q))[t^{-1}_G]
\Rightarrow \CH^*(X, p+q)[t^{-1}_G]
\end{equation}
such that its edge homomorphism $\CH^*_G(X,q)
{\otimes}_{S(G)} {\Z}[t^{-1}_G] \to \CH^*(X, q)[t^{-1}_G]$
is induced by the forgetful map. 
\end{thm}

The above spectral sequence yields the algebraic analogue
of the topological Eilenberg-Moore spectral sequence \cite{EM}
\begin{equation}\label{eqn:EMoore}
E^2_{p,q} = {\rm Tor}^{H^*_G(pt)}_{p,q}(\Z, H^*_G(X, \Z)) \Rightarrow
H^{p+q}(X, \Z)
\end{equation}
in equivariant singular cohomology. 

To see the connection between the two spectral sequences, recall that for 
a topological space $X$ with an action of a topological group $G$, there is a
topological fibration $X \to X \stackrel{G}{\times} EG \to BG$,
where $BG$ is the classifying space of $G$ and $EG \to BG$ is a 
universal fiber bundle for the group $G$. An application of the
Eilenberg-Moore machinery \cite{EM} to this fibration yields 
~\eqref{eqn:EMoore}.

On the algebraic side, we have the $\A^1$-homotopy theory of 
Morel-Voevodsky \cite{MV}, where the notion of fibrations of motivic sheaves
makes sense. Given a reductive group $G$ over $k$, there is a motivic 
sheaf $BG$ as an object of the $\A^1$-homotopy category $\sH(k)$.
Similarly, given an action of $G$ on a smooth scheme $X$, there is a
motivic sheaf $X \stackrel{G}{\times} EG$ in $\sH(k)$ and
there is morphism $X \stackrel{G}{\times} EG \to BG$ with fiber $X$.
Theorem~\ref{thm:SS-Red-Main-I} then says that in this algebraic set up too,
the Eilenberg-Moore type spectral sequence exists.

%\enlargethispage{35pt} 

Let $G$ be a reductive group acting on a quasi-projective
$k$-scheme $X$. If $X^G$ denotes the fixed point locus for this action, 
we have $X^G \stackrel{G}{\times} EG \simeq X^G \times BG$. 
This gives rise to a diagram
\[
\xymatrix@C1pc{
\CH^*(X^G, q) \otimes_{\Z} S(G) \ar[r] \ar[d] & \CH^*_G(X^G, q) \ar[d] \\
\CH^*(X, q) \otimes_{\Z} S(G) \ar@{-->}[r] & \CH^*_G(X, q),}
\]
where the horizontal arrow on the top is the external product map and the
vertical arrows are the push-forward maps.
Our next result shows that the dotted arrow can be completed to
an isomorphism in certain cases.

\begin{thm}\label{thm:Degn-*}
Let $G$ be a split reductive group acting on a smooth projective
scheme $X$ over $k$ and let $q \ge 0$ be any integer.
\begin{enumerate}
\item
If $G$ is a split torus, there is an isomorphism of $S(G)$-modules
\[
\CH^*(X, q) {\otimes}_{\Z} S(G) 
\xrightarrow{\simeq} \CH^*_G(X, q).
\]
\item
The natural map
\[
r^G_X: \CH^*_G(X,q) \otimes_{S(G)} \Z[t^{-1}_G] \to \CH^*(X,q)[t^{-1}_G]
\]
is an isomorphism.
\end{enumerate}
\end{thm}

\subsection{Applications}\label{sect:Appl-*}
We now give some important applications of the above results.
As the first consequence of Theorem~\ref{thm:Degn-*}, we prove the following
generalization of the results of Merkurjev
\cite[Corollary~10]{Merkurjev} and Graham \cite[Theorem~1.1]{Graham} to 
higher $K$-theory of smooth projective schemes.

For a smooth linear algebraic group $G$, let $R(G)$ denote the representation
ring of $G$. This is another name of the Grothendieck group $K^G_0(k)$
of $G$-equivariant vector bundles on $\Spec(k)$. Let
$I_G$ denote the ideal of $R(G)$ which consists of the virtual representations
of $G$ of rank zero. That is, $I_G$ is the kernel of the rank map
$R(G) \to \Z$. For a $G$-scheme $X$, let $K^G_*(X)$ 
(resp. $G^G_*(X)$) denote the homotopy groups of the $K$-theory 
spectrum of $G$-equivariant vector bundles (resp. coherent sheaves) on $X$.
Let ${\widehat{K^G_i(X)}}$ denote the 
completion of the $R(G)$-module $K^G_i(X)$ with respect to the augmentation 
ideal $I_G$.

\begin{thm}\label{thm:forgetful}
Let $G$ be a reductive group acting on a smooth 
projective scheme $X$ over $k$. Then for all $i \ge 0$, the forgetful map 
$K^G_i(X) \to K_i(X)$ induces an isomorphism
\begin{equation}\label{eqn:char}
{K^G_i(X)}/{I_GK^G_i(X)} \xrightarrow{\simeq} K_i(X)
\end{equation}
with rational coefficients. In particular, the natural map
${K^G_i(X)}_{\Q} \to {K_i(X)}_{\Q}$ is surjective.
\end{thm}

Graham had proven this result for the $K_0$ of any scheme with an action of 
a reductive group. Merkurjev proved this for $K_0$ 
(with integral coefficients) for those groups whose commutator subgroups
are simply connected. In general however, it is necessary to tensor the 
$K$-groups with $\Q$ to prove the surjectivity of the forgetful map 
(see \cite[Example~4.1]{Graham}). 

We shall show that (see Example~\ref{exm:Forget-2})
\thmref{thm:forgetful} no longer holds if $X$ is a smooth quasi-projective
$k$-scheme which is not necessarily projective, and if $i \neq 0$.

Our second application of Theorems~\ref{thm:SS-Red-Main-I} and 
~\ref{thm:Degn-*} is
the following generalization of the equivariant Riemann-Roch theorem 
of Edidin and Graham \cite[Theorem~4.1]{ED1} to higher 
$K$-theory. This also gives a refinement of the more general 
equivariant Riemann-Roch theorems of \cite{KrishnaV}
for smooth projective schemes and smooth quasi-projective toric varieties. 

\begin{thm}\label{thm:RRoch-Main}
Let $G$ be a reductive group acting on a smooth quasi-projective
scheme $X$ over $k$. Assume one of the following.
\begin{enumerate}
\item
$X$ is projective.
\item
$X$ is a toric variety and $G$ is the dense torus of $X$.
\item
$G = \G_m$.
\end{enumerate}
Then for every $i \ge 0$, there is a Riemann-Roch isomorphism 
\[
{\tau}^G_X : {\widehat{K^G_i(X)}}_{\Q} \xrightarrow{\simeq} 
\stackrel{\infty}{\underset{j= 0}{\prod}} {\CH^j_G(X, i)}_{\Q}
\]
\end{thm}

As an analogue of \thmref{thm:RRoch-Main} for the equivariant $K$-theory
with finite coefficients, we prove the following
equivariant version of the Quillen-Lichtenbaum conjecture.
For a $\C$-scheme $X$, let $X^{an}$ denote the associated analytic space
$X(\C)$.

\begin{thm}\label{thm:QL-*}
Let $G$ be a reductive group acting on a smooth quasi-projective
scheme $X$ of dimension $d$ over $\C$. Let $M$ be a maximal compact 
subgroup of the Lie group $G^{an}$. Let $p$ be a prime number.
Then the natural map
\[
{\rho}_X : K^G_i(X; {\Z}/{p^{\nu}}) \to
K^M_i(X^{an}; {\Z}/{p^{\nu}})
\]
is an isomorphism for $i \ge d-1$ and a monomorphism for $i = d-2$.
\end{thm}

For other applications, we refer the reader to \cite[Chap.~16]{Totaro-2},
where the spectral sequence ~\eqref{eqn:SS-M-01} for the
action of split tori was used by Totaro to independently
construct an Eilenberg-Moore spectral sequence for the
equivariant higher Chow groups for actions of split reductive groups.
This spectral sequence was then used by him to compute the 
motivic cohomology of spaces like $GL_n/G$, given a faithful
representation $G \to GL_n$ of a reductive group.

\begin{remk}\label{remk:Reductive}
In all the results stated above, it is assumed that the underlying group
is split reductive. However, these results hold for the action of any  
smooth connected linear algebraic group $G$, 
provided the ground field $k$ is perfect
and the reductive quotient of $G$ is split.
The reason for this is that any smooth connected
linear algebraic group over a perfect 
field has a smooth unipotent radical. Since a smooth unipotent group over a 
perfect field admits a composition series with successive subquotients
isomorphic to the additive group, the homotopy invariance reduces
the general case to the reductive case.
\end{remk} 

\begin{comment}
If the ground field $k$ is of characteristic zero, then every connected
linear algebraic group over $k$ has a Levi decomposition. Using this 
and \cite[Proposition~B.6]{KrishnaV} (and the analogous result for
equivariant $K$-theory), one concludes that all the above results
hold (in char. 0) when $G$ is any connected (not necessarily reductive) 
linear algebraic group.
\end{comment} 

\subsection{Brief outline of this paper}
Our construction of the spectral sequence is based on a technique 
developed by Levine \cite{Levine} to generalize  Quillen's localization 
sequence in algebraic $K$-theory.  
In \S~\ref{sect:SS-I}, we briefly recall the definition and basic
properties of the equivariant higher Chow groups. We also recall Levine's
notion of $n$-cubes in a category and explain as to 
how these $n$-cubes give rise
to a tower of simplicial abelian groups and the resulting 
Bousfield-Kan spectral sequence. 

\S~\ref{sect:SS-T*} is devoted to the construction of the claimed 
Eilenberg-Moore spectral sequence in the case
of a split torus action. To do this, we adapt Levine's technique to
produce a spectral sequence with an easily described $E_1$-page.
By combining this spectral sequence with some geometric inputs,
we are able to compute the $d_1$-differentials and identify the $E_2$-page
explicitly. The general case is proven in \S~\ref{sect:SS-R} by reducing to 
the case of maximal tori of split reductive groups. This reduction
is achieved through \thmref{thm:Sspl-M}.

In \S~\ref{sect:Proj}, we use the results of Bialynicki-Birula
to obtain an integral presentation of the equivariant higher Chow groups of 
smooth projective schemes with a split torus action. This presentation is
used to prove \thmref{thm:Degn-*}. In \S~\ref{sect:EKT}, we give applications 
of Theorems~\ref{thm:SS-Red-Main-I} and ~\ref{thm:Degn-*} to the equivariant 
$K$-theory. The final section 
consists of a proof of the equivariant Quillen-Lichtenbaum conjecture.

\section{Equivariant Chow 
Groups and Bousfield-Kan Spectral Sequence}\label{sect:SS-I}
The main tool we shall use to construct the equivariant to ordinary higher 
Chow groups spectral sequence is a technique developed by
Levine \cite{Levine} to generalize Quillen's
localization sequence in algebraic $K$-theory.
We shall construct a spectral sequence based on Levine's idea
and then prove \thmref{thm:SS-Red-Main-I} by suitably 
identifying the terms of this spectral sequence. 
\subsection{Basic Notations}\label{sec:Note}
We shall fix a base field $k$
and let $\Sch_k$ denote the category of separated schemes which are of
finite type over $k$. The full subcategory of $\Sch_k$ consisting of
smooth schemes over $k$ will be denoted by $\Sm_k$.
We shall assume all linear algebraic groups to be smooth over $k$. 
Recall that an action of a 
linear algebraic group $G$ on $X \in \Sch_k$ is said 
to be {\sl linear}, if $X$ admits a $G$-equivariant ample line bundle.
When $G$ is connected and $X$ is normal and quasi-projective over $k$, 
Sumihiro \cite[Theorem~2.5]{Sumihiro} showed that any $G$-action on $X$ 
can be linearized. This assumption of connectedness was
later removed by Thomason \cite[5.7]{Thomason1}. 
But it was essential for Thomason's argument
that $X$ be still quasi-projective. 
%the reason being that he needed to
%consider symmetric powers of schemes which do not always exist unless
%we work with quasi-projective schemes. 
Apart from this, we also need
to use Bloch's localization sequence for higher Chow groups, which
requires one to assume the underlying scheme to be quasi-projective.

We shall therefore assume throughout this text that 
all schemes in $\Sch_k$ with
$G$-action are quasi-projective over $k$ with linear $G$-action.
We shall denote the category of such schemes by $\Sch^G_k$. 
The full subcategory of $\Sch^G_k$ consisting of smooth $k$-schemes
will be denoted by $\Sm^G_k$. 
Recall that a reductive group $G$ is {\sl split} (over $k$),
if it admits a maximal torus which is split over $k$ 
(i.e., is a product of some copies of $\G_m/k$).
In this text, we shall assume all representations of $G$ to be
rational and finite-dimensional.

\enlargethispage{25pt}

\subsection{Equivariant Chow groups}
\label{sect:Recall}
Let $G$ act on an equi-dimensional scheme $X$ over $k$
and let $j \ge 0$ be an integer.
A {\sl good pair} for $G$-action (corresponding to $j$)
is a rational representation $V$ of $G$ and a $G$-invariant
open $U \subseteq V$ such that the following hold.
\begin{enumerate}
\item
$G$ acts freely on $U$ such that the quotient $U/G$ is 
smooth and quasi-projective over $k$.
\item
The complement
$V \setminus U$ has codimension bigger than $j$. 
\end{enumerate}

It is known that the good pairs always exist \cite[Lemma~9]{ED1}.
Moroever, the mixed quotient $X \stackrel{G}{\times} U$ is a 
quasi-projective scheme (see \cite[Proposition~23]{ED2}, 
\cite[Lemma~B.1]{KrishnaV}) and is smooth over $k$ if $X$ is so. 
We shall often denote this mixed quotient by $X_G$, if the pair $(V,U)$
is understood.
The equivariant higher Chow group $\CH^j_G\left(X, i \right)$ is 
defined as the homotopy group $H_i({\sZ}^j(X_G, \bullet))$ of the 
simplicial abelian group ${\sZ}^j(X_G, \bullet)$ 
whose associated chain complex via the Dold-Kan correspondence is
the Bloch's cycle complex of the scheme $X_G$. 
The groups $\CH^j_G(X, i)$ are independent of the choice of a good pair 
$(V,U)$. We let 
\[
\CH^*_G(X,i) = \oplus_{j \ge 0} \CH^j_G(X, i), \
\CH^*_G(X,\bullet) = \oplus_{i \ge 0} \CH^*_G(X, i) \
\mbox{and} \ \CH^*_G(X) = \CH^*_G(X, 0).
\]

Let $J_G$ denote the kernel of the augmentation map $S(G) \surj \Z$,
where $S(G) = \CH^*(BG) := \CH^*_G(\Spec(k))$
coincides with Totaro's Chow group of the classifying space $BG$ 
\cite{Totaro}.
The following result summarizes most of the essential properties of 
the equivariant higher Chow groups that are used in this text. 

\begin{prop}$($\cite[Proposition~2.2]{Krishna1}$)$\label{prop:EHCG}
The equivariant higher Chow groups as defined above satisfy the following
properties. \\
\begin{enumerate}
\item
$Functoriality:$ Covariance for proper maps, contravariance for
flat maps and their compatibility. That is, for a Cartesian square
\[
\xymatrix@C.7pc{
X' \ar[r]^{g'} \ar[d]_{f'} & X \ar[d]^{f} \\
Y' \ar[r]_{g} & Y}
\]
in $\Sch^G_k$ with $f$ proper and $g$ flat, 
one has $g^* \circ f_* = {f'}_* \circ
{g'}^* : \CH_G^*(X, i) \to \CH_G^*(Y', i)$.
Moreover, if $f:X \to Y$ is a morphism in $\Sch^G_k$ with
$Y$ in $\Sm^G_k$, then there is a pull-back map $f^*: \CH_G^*(Y,i)
\to \CH_G^*(X,i)$. All pull-back and push-forward maps are $S(G)$-linear.
\item
$Homotopy:$ If $f:X \to Y$ is an equivariant affine bundle, then
$f^*: \CH_G^*(Y,i) \xrightarrow{\simeq} \CH_G^*(X,i)$. 
\item
$Exterior \ product:$ There is a natural product map 
\[
\CH_G^*(X,i) {\otimes}  \CH_G^*(Y,i') \to  \CH_G^*(X \times Y,i+i').
\]
Moreover, if $f: X \to Y$ is such that $Y \in \Sm^G_k$, then there is
a pull-back via the graph map ${\Gamma}_f : X \to {X \times Y}$, which makes
$\oplus_{i \ge 0} \CH^*_G(Y,i)$ a bigraded ring and 
$\oplus_{i \ge 0}  \CH_G^*(X)$ a module
over this ring. In particular, $\CH_G^*(X, i)$ an $S(G)$-module for 
$X \in \Sch^G_k$ and $i \ge 0$.
\item
$Localization:$ If $Y \subset X$ is a $G$-invariant closed
subscheme with complement $U$, then there is a long exact localization 
sequence of $S(G)$-modules
%\begin{equation}\label{eqn:LOC}
\[
\cdots \to  \CH_G^*(Y,i) \to \CH_G^*(X,i) \to  \CH_G^*(U,i) \to
\CH_G^*(Y,i-1) \to \cdots.
\]
%\end{equation}
This sequence is compatible with the proper push-forward and flat pull-back maps
of higher Chow groups.
\item
$Chern  \ classes:$ For any $G$-equivariant vector bundle of rank $r$,
there are equivariant Chern classes $c^G_l(E):  \CH_G^j(X,i) \to
 \CH_G^{j+l}(X,i)$ for $0 \le l \le r$, having the same functoriality 
properties as in the non-equivariant case and $c^G_0(E) = 1$. 
\item
$Projection \ formula:$ For a proper map $f: X \to Y$ in
$\Sch^G_k$ with $Y$ smooth, and for $x \in \CH_G^*(X),
y \in \CH^{*}_G(Y)$, one has $f_*\left(f^*(y) \cdot x \right) =
y \cdot f_*(x)$. 
\item
$Free \ action:$ If $G$ acts freely on $X$ with quotient $Y \in \Sch_k$, then 
there is  a canonical isomorphism $\CH_G^*(X,i) \xrightarrow{\simeq}
\CH^*(Y,i)$.
\end{enumerate}
\end{prop}

\begin{remk}\label{remk:shift}
The reader should be warned that various isomorphisms between the
(equivariant) higher Chow groups in the above proposition are true only up to 
some obvious shift in the dimension of cycles, which we chose not to
highlight.

Another remark is that 
part (2) of the proposition is proven in \cite[Proposition~2.2]{Krishna1} only
for equivariant vector bundles. But the same proof works more generally
for any equivariant affine bundle. Recall here that a $G$-equivariant
affine bundle over $X \in \Sch^G_k$ is a morphism $\phi: Y \to X$ in
$\Sch^G_k$, where $f: V \to X$ is a $G$-equivariant vector bundle
and $\phi$ is a torsor under $V$ so that $\phi$ and the action map 
$V \times_X Y \to Y$ are both $G$-equivariant. 
Recall also that
an $\A^n$-fibration in $\Sch_k$ is a flat morphism $\pi: Y \to X$
such that the scheme-theoretic fiber over every point 
(not necessarily closed) $x \in X$
is isomorphic to $\A^n_{k(x)}$. One says that such a fibration is
locally trivial in some topology $\tau$ on $\Sch_k$ if there is a
$\tau$-covering $X' \to X$ such that $X' \times_X Y \simeq X' \times \A^n_k$.

Now, given a $G$-equivariant
affine bundle $\phi: Y \to X$, a good pair $(V,U)$ for $G$ gives rise to a 
$G$-equivariant affine bundle
$\phi_U: Y \times U \to X \times U$. In turn, this yields an
$\A^n$-fibration 
$Y \stackrel{G}{\times} U \to X \stackrel{G}{\times} U$ 
for some integer $n \ge 0$ (which is locally trivial in the 
$fppf$-topology). 
On the other hand, the invariance of the ordinary higher Chow
groups under $\A^n$-fibration is well known and is an easy consequence
of their nil-invariance, localization  and  
homotopy invariance (for trivial $\A^n$-fibration) properties, and 
the noetherian induction.
\end{remk}

We recall recall from \cite{ED2} that if $H \subset G$ is a closed 
subgroup and if $(V,U)$ is a good pair, then for $X \in {\sV}_G$, 
the natural map of 
quotients $X \stackrel{H}{\times} U \to X \stackrel{G}{\times} U$ is a
smooth morphism (an \'etale locally trivial $G/H$-fibration) 
and hence there is a natural restriction map
\begin{equation}\label{eqn:res}
r^G_{H, X} : \CH_G^*\left(X,i\right) \to \CH_H^*\left(X,i\right).
\end{equation} 
Taking $H = \{1\}$, one obtains the {\sl forgetful} map
\begin{equation}\label{eqn:forgetful}
r^G_X : \CH_G^*\left(X,i\right) \to \CH^*\left(X, i\right).
\end{equation}
Moreover, as $r^G_{H,X}$ is the pull-back under a flat (in fact, a smooth)
map, it commutes
(see Proposition~\ref{prop:EHCG}) with the pull-back for any flat 
map, and with the push-forward for any proper map in $\Sch^G_k$. 
We remark here that although the definition of $r^G_{H,X}$ uses a good pair
$(V,U)$, it is easy to check from the homotopy
invariance that $r^G_{H,X}$ is independent of the choice of the good pair
$(V,U)$.

The following are the analogues for equivariant Chow groups 
of the Morita isomorphisms in equivariant $K$-theory.

\begin{prop}$($\cite[Corollary~3.2]{Krishna1}$)$\label{prop:Morita}
Let $H \subset G$ be a closed subgroup and let $X \in \Sch^H_k$.
Then for any $i \ge 0$, there is a natural isomorphism
\begin{equation}\label{eqn:MoritaI}
\CH^*_G(G \stackrel{H}{\times} X, i) \xrightarrow{\simeq}
\CH^*_H\left(X, i\right).
\end{equation}
\end{prop}

\begin{prop}$($\cite[Proposition~3.3]{Krishna1}$)$\label{prop:Borel}
Let $G$ be a reductive group over $k$. Let $B$ be a Borel 
subgroup of $G$ containing a maximal torus $T$ over $k$. Then for any
$i \ge 0$ and $X \in \Sch^B_k$, the restriction map
\begin{equation}\label{eqn:Borel2}
\CH^*_B\left(X, i\right) \xrightarrow{r^B_{T,X}} \CH^*_T\left(X, i\right)
\end{equation}
is an isomorphism.
\end{prop}

\begin{comment}
\begin{prop}$($\cite[Proposition~3.4]{Krishna1}$)$\label{prop:NRL}
Let $H$ be a possibly non-reductive group over $k$.
Assume that $H$ has a Levi decomposition $H = L \ltimes H^u$ such
that $H^u$ is split over $k$ (e.g., if $k$ is perfect).
Then for each $i \ge 0$, the map
\begin{equation}\label{eqn:NRL0}
\CH^*_H\left(X, i\right) \xrightarrow{r^H_{L,X}} \CH^*_L\left(X, i\right)
\end{equation}
is an isomorphism.
\end{prop}
\end{comment}

\begin{thm}$($\cite[Theorem~8.5]{Krishna1}$)$\label{thm:reductiveI} 
Let $G$ be a reductive group and let $T$ be a split maximal
torus of $G$. Then for any $X \in \Sch^G_k$ and $i \ge 0$, the natural map of 
$S(T)$-modules
\begin{equation}\label{eqn:redI*0}
\lambda_X: \CH^*_G\left(X,i\right) {\underset{S(G)}\otimes} S(T) \to
\CH^*_T\left(X,i\right)
\end{equation}
is an isomorphism with rational coefficients. This is a ring isomorphism if 
$X \in \Sm^G_k$ and we take sum over all $i \ge 0$.
\end{thm}

\subsection{$n$-cubes and associated spectral sequence}
\label{sect:SSCube}
Recall that the $n$-cube $\<n\>$ is the category of subsets
(including the empty set) of the set $\{1, \cdots , n\}$ with morphisms being
inclusions of subsets. An $n$-cube $X_*$ in a category $\sC$ is a covariant
functor $X : \<n\> \to \sC$. For any $I \in \<n\>$, we denote the object 
$X(I)$ by $X_I$. For $n \ge 2$, we let $X_{*/n}$ denote the restriction
of $X_*$ to the subsets of $\{1, \cdots , n-1\}$ and let $X_{* \supset n}$
denote the restriction of $X_*$ to the subsets of $\{1, \cdots , n\}$
containing $n$. 
The inclusion $I \inj I \cup \{n\}$ defines a natural transformation
of $(n-1)$-cubes $r(X_*): X_{*/n} \to X_{* \supset n}$.

Let $\Delta_{\Ab}$ denote the category of simplicial abelian groups.
Recall from \cite[Theorem~2.8]{GJ} that $\Delta_{\Ab}$ admits a
closed model structure in which weak equivalences are isomorphisms
of homotopy groups, fibrations are the Kan fibrations of underlying
simplicial sets and the cofibrations are given by the left lifting
property with respect to maps which are fibrations and weak
equivalence. 

If we let ${\rm Ch}^{-}_{\Ab}$ denote the category of
bounded above chain complexes of abelian groups and define weak equivalence
to be quasi-isomorphism, fibration to be levelwise surjection, then
we get a closed model structure on ${\rm Ch}^{-}_{\Ab}$.
The Dold-Kan correspondence gives a Quillen
equivalence of model categories ${\rm Ch}^{-}_{\Ab} \simeq \Delta_{\Ab}$,
and hence an equivalence of their homotopy categories 
(see \cite[Chap.~III, \S~2]{GJ} for details).
We shall make no further distinction between these homotopy categories
for the rest of this text, and denote this common homotopy category by
$D^{-}(\Ab)$. 

Given an $n$-cube $X_*$ in $\Delta_{\Ab}$, we define an object 
${\rm fib}X_*$ in $\Delta_{\Ab}$, which is functorial in cubes,
as follows. For $n= 1$, we take ${\rm fib}X_*$ to be the homotopy fiber
of the obvious map $X_{\emptyset} \to X_{\{1\}}$. 
For $n \ge 2$, we inductively define ${\rm fib}X_*$ to be
the homotopy fiber of the natural map 
${\rm fib}(r(X_*)):{\rm fib} X_{*/n} \to {\rm fib} X_{* \supset n}$ of the 
homotopy fibers of $(n-1)$-cubes. 
Given an $n$-cube $X_*$ in $\Delta_{\Ab}$ and $p \ge 0$, let $X_{* \ge p}$
denote the $n$-cube gotten from $X_*$ by replacing the simplicial abelian group
$X_I$ with the zero object $\{*\}$ of $\Delta_{\Ab}$ whenever $|I| < p$.
This gives us a tower of $n$-cubes
\[
\{*\} \to X_{* \ge n} \to X_{* \ge n-1} \to \cdots \to X_{* \ge 0} = X_*.
\]

Associated to this tower, one gets a Bousfield-Kan type spectral
sequence \cite[Chap.~X, \S~6]{BKan} (see also \cite[\S~~1]{Levine})  
\begin{equation}\label{eqn:LSS}
E(X_*) : \ \ E^1_{p,q} = {\underset{|I| = p}{\oplus}}
{\pi}_{-q}(X_I) \Rightarrow {\pi}_{-q-p}\left({\rm fib} X_*\right),
\end{equation}
where the $E^1$-differential is the alternating sum of the maps 
$\pi_*(X_{I \subset I \cup j})$ between the homotopy groups,
induced by 
\[
X_{I \subset I \cup j} : X_I \to X_{I \cup j}, \ j \notin I.
\]
In particular, for any $e_I \in \pi_{-q}(X_I)$, we have
\begin{equation}\label{eqn:LSS0}
d^1_{p,q}(e_I) = {\underset{i_j \notin I}
{{\underset{1 \le i_1 < \cdots < i_{n-p} \le n}{\sum}}}} {(-1)}^{j-1} {\pi}_{-q}
(X_{I \subset I \cup i_j}) (e_I).
\end{equation}

\section{Proof of Theorem~\ref{thm:SS-Red-Main-I}: the torus 
case}\label{sect:SS-T*}
Let $T$ be a split torus over $k$ of rank $r \ge 1$.
In order to prove Theorem~\ref{thm:SS-Red-Main-I} for $T$-action,
our approach is to look at the inclusion $T \inj \A^r_k$ and filter its 
complement by $T$-invariant linear subspaces. A consideration of the
cycle complexes associated to these subspaces gives rise to 
an $r$-cube in $\Delta_{\Ab}$. We complete the proof of the theorem by 
suitably identifying the terms of the resulting spectral sequence.

\subsection{$n$-cubes associated to torus action on a scheme}
\label{sect:SST-action}
Let us fix a basis $\{{\chi}_1, \cdots , {\chi}_r\}$ of the character 
group $T^{\vee} \simeq \Z^r$.
We define an action of $T$ on $\A^r_k$ by $t.x = y$, where 
$y_i = {\chi}_i (t)x_i$ for $1 \le i \le r$. Let $H_i$ be the coordinate
hyperplane in ${\A}^r_k$ given by the equation $(x_i = 0)$. 
Then each $H_i$ is a $T$-invariant closed subscheme of $\A^r_k$
and $T \simeq {\A}^r_k \setminus \stackrel{r}{\underset{i=1}{\cup}} H_i$.
For any element $I \in \<r\>$, let
\[
H_I =  \left\{ \begin{array}{ll}
{\underset{i \notin I}{\cap}} H_i & {\rm if} \ I \neq \{1, \cdots ,n\} \\
{\A}^r_k & {\rm if} \ I = \{1, \cdots , n\}.
\end{array}
\right.   
\]

For $I \subset J$, we have the natural inclusion $H_I \subset H_J$ and
$H_{\emptyset} = \stackrel{r} {\underset{i = 1}{\cap}} H_i$.
From this, we see that for a $T$-scheme $X$, 
the assignment $I \mapsto X \times H_I$ gives an $r$-cube $X_*$ in
the category of $T$-invariant closed subschemes of $X \times \A^r_k$
via the diagonal action of $T$ on the product. Let $r_I$ denote the
dimension of $H_I$. 

Let $j \ge 0$ be an integer and let $(V_j,U_j)$ be a good pair for $T$-action
corresponding to $j+r$. Set $\sZ^{(j)}_I = 
\sZ^{j+r_I}((X \times H_I) \stackrel{T}{\times} U_j, \bullet)$.
The following is then straightforward to check.

\begin{lem}\label{lem:n-cube}
The assignment $I \mapsto \sZ^{(j)}$ with inclusions sent to push-forward of
cycles defines an $n$-cube in $\Delta_{\Ab}$.
\end{lem}

We let ${\sZ}^{(j)}_*$ denote the $n$-cube considered in \lemref{lem:n-cube}
and let $\sZ_* = {\underset {j \ge 0}{\coprod}} {\sZ}^{(j)}_*$.

\begin{lem}\label{lem:fiberZ}
There is a natural isomorphism in $D^{-}(\Ab)$:
\[
{\rm fib} {\sZ}_* \xrightarrow{\simeq} {\underset{j \ge 0}{\coprod}}
{\sZ}^{j+r}({(X \times T)} 
\stackrel{T}{\times} U_j, \bullet) [-r].
\]
\end{lem}
\begin{proof}
We shall prove the lemma by induction on the length $r = |\<r\>|$ of 
cubes. For $r = 1$, this is simply the localization fiber sequence 
of Bloch \cite[Theorem~3.3]{Bloch}:
\[
{\sZ}^{j}({(X \times  \{0\})} \stackrel{T}{\times}U_j, \bullet)
\to 
{\sZ}^{j+1}({(X \times  {\A}^1_k)} \stackrel{T}{\times}U_j, \bullet)
\to 
{\sZ}^{j+1}({(X \times  T)} \stackrel{T}{\times}U_j, \bullet).
\]

Assuming $r \ge 2$ and the lemma holds for $r' \le r-1$,
we have
${\rm fib} {\sZ}^{(j)}_* = {\rm fib} ({\rm fib} {\sZ}^{(j)}_{*/r} \to {\rm fib} 
{\sZ}^{(j)}_{* \supset r})$ in the above notations.
Unwinding the definitions, one checks that ${\sZ}^{(j)}_{*/r}$ is canonically
identified with
the functor ${\sZ}^{(j)}$ applied to $H_r \simeq \A^{r-1}_k$ 
with closed subschemes $\{H_r \cap H_1, \cdots, H_r \cap H_{r-1}\}$. 
Setting $H_{r,l} = H_r \cap H_l$ for $1 \le l \le r-1$, it follows  
by induction that 
\begin{equation}\label{eqn:L0}
{\rm fib} {\sZ}^{(j)}_{*/r} \simeq
{\sZ}^{j+r-1}({(X \times (H_r \setminus 
\stackrel{r-1}{\underset{l = 1}{\cup}} H_{r,l}))} 
\stackrel{T}{\times} U_j, \bullet) [-r+1]
\end{equation}
in $D^{-}(\Ab)$.  

On the other hand, ${\sZ}^{(j)}_{* \supset r}$ is isomorphic to
the functor ${\sZ}^{(j)}_*$ applied to the $(r-1)$-cube of closed subschemes
of $\A^r_k$ given by
$X(I) = {\underset{1 \le l \le r-1}
{\underset{l \notin I}{\cap}}} H_l$ for $I \subset \{1, \cdots , r-1 \}$.
It follows again by induction that
\begin{equation}\label{eqn:L1}
{\rm fib} {\sZ}^{(j)}_{* \supset r} \simeq
{\sZ}^{j+r} ({(X \times (\A^r_k \setminus
\stackrel{r-1}{\underset{l = 1}{\cup}} H_l))}
\stackrel{T}{\times} U_j, \bullet) [-r+1].
\end{equation}
The lemma now follows by combining ~\eqref{eqn:L0}, ~\eqref{eqn:L1}, the 
localization fiber sequence
\[
{\sZ}^{j+r-1}({(X \times (H_r \setminus
\stackrel{r-1}{\underset{l = 1}{\cup}} H_{r,l}))} 
\stackrel{T}{\times} U_j, \bullet) \to
{\sZ}^{j+r} ({(X \times (\A^r_k \setminus 
\stackrel{r-1}{\underset{l = 1}{\cup}} H_l))} 
\stackrel{T}{\times} U_j, \bullet) \hspace*{2cm}
\]
\[
\hspace*{8cm} \to
{\sZ}^{j+r}({(X \times (\A^r_k \setminus 
\stackrel{r}{\underset{l = 1}{\cup}} H_l))} 
\stackrel{T}{\times} U_j, \bullet)
\]
and the identification 
$\A^r_k \setminus \stackrel{r}{\underset{l = 1}{\cup}} H_l = T$.
\end{proof}

Using Lemma~\ref{lem:fiberZ} and taking the homotopy groups of 
${\sZ}_I$ and ${\rm fib} {\sZ}_*$, the spectral sequence 
~\eqref{eqn:LSS} in our case becomes
\begin{equation}\label{eqn:LSS1*}
E^1_{p,q} = {\underset{|I| = r-p}{\oplus}}
\CH^*_T(X \times H_I, q) \Rightarrow
\CH^*_T(X \times T, p+q).
\end{equation} 

Since $T$ acts freely on itself and hence on $X \times T$ with quotient $X$,
it follows from \propref{prop:EHCG}(7) that the abutment of this spectral
sequence is isomorphic to $\CH^*(X, \bullet)$.
We therefore get a spectral sequence
\begin{equation}\label{eqn:LSS1}
E^1_{p,q} = {\underset{|I| = r-p}{\oplus}}
\CH^*_T(X \times H_I, q) \Rightarrow
\CH^*(X, p+q).
\end{equation}

To complete the proof of Theorem~\ref{thm:SS-Red-Main-I} for the torus $T$, 
we shall now identify the $E^1$-terms and the differentials of the spectral 
sequence ~\eqref{eqn:LSS1}. We shall use the following general
result for this purpose.

\begin{lem}\label{lem:moving0}
Let $G$ be a linear algebraic group acting on a smooth quasi-projective 
scheme $X$ over $k$.
Let $E \xrightarrow{p} X$ be a $G$-equivariant line bundle on $X$ and let  
$X \xrightarrow{f} E$ be the zero-section embedding. Then there exists
a pull-back map $\CH^*_G(E, \bullet) \xrightarrow{f^*}
\CH^*_G(X, \bullet)$ such that $f^* \circ p^* = id$ and
$f^* \circ f_*$ is the action of $c^G_1(E)$ on 
$\CH^*_G(X, \bullet)$.
\end{lem}
\begin{proof}
By choosing a good pair $(V, U)$ for the $G$-action and
considering the appropriate mixed quotients, we are reduced to proving
the non-equivariant version of the lemma.

Given a finite collection of irreducible closed subsets
$\sC$ of $E$ and integers $j, p \ge 0$, we let
$\sZ^j_{\sC}(X,p)$ be the subgroup of $\sZ^j(X,p)$ generated by integral
closed subschemes $Z \subset X \times \Delta^p$ such that for each $C \in \sC$
and each face $F \subset \Delta^p$, we have
$\dim(Z \cap (C \times F)) \le \dim(C) + \dim(F)- j$.
It is straightforward to check that $\{\sZ^j_{\sC}(X,p)\}_{p \ge 0}$
together form a subcomplex of $\sZ^j(X,\bullet)$
(see, for instance, \cite[Definition~1.9]{KL}).
One knows that a combination of the localization sequence for the
cycle complexes of all quasi-projective schemes (possibly singular)
from \cite{Bloch-1} and the moving lemma for smooth projective 
schemes from \cite{Bloch} proves that the inclusion
$\sZ^j_{\sC}(E, \bullet) \inj \sZ^j(X,\bullet)$ is a quasi-isomorphism.
In our case, we conclude that the inclusion
${\sZ}^j_X(E, \bullet) \to {\sZ}^j(E, \bullet)$ is a quasi-isomorphism.

On the other hand,
there is a natural map ${\sZ}^j_X(E, \bullet) \xrightarrow{f^*}
{\sZ}^j(X, \bullet)$ given by intersecting any cycle with $X$.
From this, it is clear that the induced map $f^* \circ p^*$ on 
$\CH^*(X, \bullet)$ is identity.
Moreover, since $X$ is a Cartier divisor on $E$ and since $f^*$ is
defined by intersection with $X$, one has $f_* \circ f^* = c_1(L)$,
where $L$ is the line bundle on $E$ defined by the zero-section.
In particular, we have for any $x \in \CH^*(X, \bullet)$,
\[
\begin{array}{lll}
f^* \circ f_*(x) & = & f^* \circ f_*(f^* \circ p^*(x)) \\
& = & f^*(c_1(L) \cdot p^*(x)) \\ 
& = & f^*(c_1(L)) \cdot (f^* \circ p^*(x)) \\
& = & c_1(f^*(L)) \cdot x \\
& = & c_1(N_{X/E}) \cdot x \\
& = & c_1(E) \cdot x
\end{array}
\]   
and this proves the lemma.
\end{proof}

\subsection{The equivariant to ordinary Chow groups spectral
sequence}\label{sect:EOHCGSS}
We now identify the $E^1$-terms and differentials of the spectral sequence 
~\eqref{eqn:LSS1} with the help of ~\eqref{eqn:LSS0} and \lemref{lem:moving0}.
For a fixed $I \in \<r\>$ with $|I| = r-p$ and 
$j \notin I$, let $J = I \cup \{j\}$. We can then identify $H_J$ with
$H_I \times {\A}^1_k$, where the action of $T$ on this product is induced
by the action of $T$ on $\A^r_k$ as described above.
Identifying $\CH^*_T\left(X \times H_I, q\right)$ and
$\CH^*_T\left(X \times H_J, q\right)$ with 
$\CH^*_T\left(X, q\right)$ via the homotopy invariance (see 
\propref{prop:EHCG}(2)), the differential
of ~\eqref{eqn:LSS1} is given by the composite
\[
\CH^*_T\left(X, q\right) \xrightarrow{\iota_*} 
\CH^*_T\left(X \times {\A}^1_k, q\right) \xrightarrow{\iota^*}
\CH^*_T\left(X, q\right),
\] 
where $\iota$ is the zero-section embedding.

It follows from 
Lemma~\ref{lem:moving0} that this composite map is multiplication by
$c^T_1(N)$, where $N$ is the equivariant normal bundle of the origin in 
the one-dimensional representation ${\A}^1_k$ of $T$.
On the other hand,
this representation is given by a generator $\chi$ of $\G_m^{\vee}$ with 
corresponding line bundle $L_{\chi}$. 
As the class of $\chi$ in $K^T_0(k)$ is given by 
${\chi} - 1$, we get $c^T_1(N) = c^T_1({\chi} - 1) = L_{\chi}$.
We conclude from this and the description of the differentials in
~\eqref{eqn:LSS0} that 
\[
d^1_{p,q}: {\underset{|I| = r-p}{\oplus}}
\CH^*_T(X \times H_I, q)  \to
{\underset{|J| = r-p+1}{\oplus}}
\CH^*_T(X \times H_J, q) 
\]
is of the form
\begin{equation}\label{eqn:DL}
d^1_{p,q}(xe_I) = \stackrel{p}{\underset{l= 1}{\sum}}
{(-1)}^{l-1} (L_{{\chi}_{i_l}} x) e_{I \cup i_l},
\end{equation}
where $\{i_1, \cdots , i_p\} = I^c = \{1, \cdots , r\} \setminus I$.
Here, $xe_I$ denotes an element $x \in \CH^*_T(X, q)$
under the identification $\CH^*_T(X, q) \xrightarrow{\simeq}
\CH^*_T(X \times H_I, q)$.

Let $S = S(T)$ denote the ring $\CH^*_T(k,0)$. One knows that
this is isomorphic to the polynomial ring ${\Z}[t_1, \cdots , t_r]$,
where $t_l$ is the cycle class of the line bundle $L_{{\chi}_l}$
(see \cite[\S~3.2]{ED2}).
Let $K_{\bullet} = \stackrel{r}{\underset{l = 1}{\otimes}}
K_{\bullet}(S \xrightarrow{t_l} S)$ denote the Koszul
resolution of $S/{J_T}$.

\begin{lem}\label{lem:Koszul}
There is a canonical isomorphism of the complexes of $S$-modules
\[
E^1_{\bullet, q} \xrightarrow{\simeq} K_{\bullet} {\otimes}_S 
\CH^*_T\left(X, q \right),
\]
where $E^1_{\bullet, q}$  is the spectral sequence ~\eqref{eqn:LSS1}.
\end{lem}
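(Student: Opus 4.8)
The plan is to match the explicit complex $E^1_{\bullet,q}$ computed in equation~\ref{eqn:DL} with the Koszul complex $K_\bullet$ term by term. First I would set up the bookkeeping of the two complexes. The Koszul complex $K_\bullet = \bigotimes_{l=1}^r K_\bullet(S \xrightarrow{t_l} S)$ has, in homological degree $p$, the free $S$-module $\bigwedge^p S^r$ with basis indexed by $p$-element subsets $\{i_0 < \cdots < i_{p-1}\} \subset \langle r \rangle$, written as wedges $e_{i_0} \wedge \cdots \wedge e_{i_{p-1}}$; after tensoring with $CH^*_G(X,q)$ over $S$ its degree-$p$ term is $\bigoplus_{|L| = p} CH^*_G(X,q)\, e_L$ and the differential sends $x\, e_{i_0 \wedge \cdots \wedge i_{p-1}}$ to $\sum_l (-1)^l (t_{i_l} x)\, e_{i_0 \wedge \cdots \widehat{i_l} \cdots \wedge i_{p-1}}$, i.e.\ it drops an index. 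Meanwhile $E^1_{p,q} = \bigoplus_{|I| = r - p} CH^*_G(X,q)\, e_I$ by \ref{eqn:LSS1}, and its differential $d^1$ from \ref{eqn:DL} raises $|I|$ by one (adds an index from $I^c$) with sign $(-1)^l$ and coefficient $L_{\chi_{i_l}} = t_{i_l}$.

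The natural identification to propose is the complementation map: to a subset $I$ with $|I| = r - p$ associate its complement $L = I^c$ with $|L| = p$, sending the basis element $e_I \in E^1_{p,q}$ to (a sign times) $e_L \in (K_\bullet \otimes_S CH^*_G(X,q))_p$. The key step is to choose the signs so the differentials are intertwined. When $d^1$ adds the index $i_l \in I^c$ to $I$, on the complement side this \emph{removes} $i_l$ from $L = I^c$; both operations multiply by $t_{i_l}$, so up to sign we are comparing the "add to $I$" incidence coefficients of $d^1$ with the "delete from $L$" incidence coefficients of the Koszul differential. I would fix an ordering convention — e.g.\ send $e_I$ to $\pm e_{L}$ where the sign is the signature of the shuffle permutation that interleaves the increasing listing of $I$ with that of $L = I^c$ to recover $1 < 2 < \cdots < r$ — and then carry out the (purely combinatorial) verification that under this convention the sign $(-1)^l$ appearing in \ref{eqn:DL} for inserting $i_l$ into $I$ matches the Koszul sign for deleting $i_l$ from $L$. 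This is the standard self-duality of the Koszul complex ($K_\bullet \cong K_{r-\bullet}$ up to a shift and sign twist), packaged concretely.

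Once the two differentials agree under this $S$-linear basis bijection, the map is visibly an isomorphism of complexes of $S$-modules, since it is a bijection on a chosen $S$-basis in each degree and commutes with the differentials; $S$-linearity is automatic because all the structure maps (the zero-section pushforwards, the homotopy-invariance identifications, and multiplication by $t_l = L_{\chi_l}$) are $S$-linear by the results of \cite{KrishnaV} recalled in Section~1. Functoriality in $X$ (the "canonical" in the statement) follows because every ingredient — the cube $\sZ_*$, Levine's $\fib$ construction, the spectral sequence \ref{eqn:LSS}, and the differential identification via Corollary~\ref{cor:moving0} — is natural in the $G$-variety $X$.

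The main obstacle I anticipate is not conceptual but sign-tracking: making the complementation bijection $I \leftrightarrow I^c$ intertwine the two differentials on the nose requires pinning down an explicit sign convention and checking it against \ref{eqn:DL} and against Levine's $d^1$ from \ref{eqn:LSS0}, where the indexing of the basis elements $e_I$ and the order in which faces are added must be reconciled. I would handle this by reducing to the universal case $X = \Spec k$ (where $CH^*_G(X,q) = S$ for $q=0$ and both complexes are literally the Koszul complex of $(t_1,\dots,t_r)$, read in opposite degree orders) to calibrate the signs, and then observe that the general case is obtained from this by applying $- \otimes_S CH^*_G(X,q)$, since the $E^1$-differential \ref{eqn:DL} is exactly multiplication by the $t_{i_l}$'s tensored over $S$.
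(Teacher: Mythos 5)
Your approach is essentially the paper's: the proof writes down exactly the complementation map $\phi(x e_I) = x f_{I^c}$ and checks directly that it intertwines the differential of~\ref{eqn:DL} with the Koszul differential. The sign-calibration you anticipate is in fact vacuous: $i_l$ is the $(l+1)$-st smallest element of $I^c$, so inserting $i_l$ into $I$ with the sign $(-1)^l$ of~\ref{eqn:DL} matches deleting $i_l$ from the $(l+1)$-st position of $f_{I^c}$ with the Koszul sign $(-1)^{(l+1)-1} = (-1)^l$, and no shuffle sign is needed.
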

\begin{proof} 
The terms of $K_{\bullet}$ are given by 
$K_p = {\underset{1 \le i_1 < \cdots < i_p \le r}{\bigoplus}}
S {e_{i_1} \cdots e_{i_p}}$, which is a free $S$-module of rank
$p \choose r$ with basis
$\{f_I = e_{i_1 \cdots e_{i_p}}| 1 \le i_1 < \cdots < i_p \le r\}$ for 
$1\le p \le r$ and $K_0 = S$.  
The differentials of this complex are given by
\[
d(e_{i_1 \cdots e_{i_p}}) =
\stackrel{p}{\underset{l= 1}{\sum}}
{(-1)}^{l-1} t_{i_l} e_{i_1 \cdots {\widehat{i_l}} \cdots e_{i_p}}
\]
for $1 \le p \le r$ and $d(e_l) = t_l$ for $p = 1$.
Comparing this with ~\eqref{eqn:DL}, it is now easy to see that the map 
${\phi}: E^1_{\bullet, q} \to
K_{\bullet} {\otimes}_S \CH^*_T\left(X, q \right)$, given by
${\phi}\left(xe_I\right) = xf_{I^c}$,
yields the desired isomorphism of the two complexes.
\end{proof}

We now prove the torus part of \thmref{thm:SS-Red-Main-I}, restated
as follows.

\begin{thm}\label{thm:Sseq-T}
Let $T$ be a split torus acting on a smooth quasi-projective 
scheme $X$ over $k$. 
Then there is a convergent homological spectral sequence 
\begin{equation}\label{eqn:SS0-T} 
E^2_{p,q} = {\rm Tor}^{S(T)}_p ({\Z}, \CH^*_T(X, q))
\Rightarrow \CH^*(X, p+q)
\end{equation}
such that the edge homomorphism $\CH^*_T(X,i)
{\otimes}_{S(T)} {\Z} \to \CH^*(X, i)$
is induced by the forgetful map $\CH^*_T(X, i) \to
\CH^*(X, i)$. 
\end{thm}
\begin{proof}
We have seen above that $K_{\bullet}$ is
a free $S$-module resolution of $\Z = S/{J_T}$. Hence, it follows 
at once from Lemma~\ref{lem:Koszul} that there is a convergent
spectral sequence as in ~\eqref{eqn:SS0-T} with the desired $E^2$-terms.
Moreover, ~\eqref{eqn:LSS1} shows that this spectral sequence 
converges to $\CH^*(X, p+q)$.
\end{proof}

\begin{exm}\label{exm:SS*}
We give an example to demonstrate how the spectral sequence of
Theorem~\ref{thm:Sseq-T} works. We first remark that the homological
nature of this spectral sequence implies 
(see \cite[\S~5.2]{Weibel1}) that the differential of this 
spectral sequence
is given by $d^r : E^r_{p,q} \to E^r_{p-r, q+r-1}$.
Moreover, there is a filtration
\[
0 = F_{-1} \CH^*(X,n) \subseteq \cdots \subseteq F_{p-1}
\CH^*(X,n) \subseteq 
F_p \CH^*(X,n) \subseteq 
\]
\[
\hspace*{9cm} \cdots \subseteq F_t \CH^*(X,n) 
= \CH^*(X,n) 
\]
such that $E^{\infty}_{p,q} = {F_p}/{F_{p-1}}$.

Now consider $T = {\G}_m$ acting on itself
by left multiplication which is the restriction of the linear action of
$\G_m$ on $\A^1_k$ with weight $1$. In particular, we have $S(T) \simeq
\Z[t]$, the polynomial ring.
It follows from \propref{prop:EHCG}(4) that there is a split exact
sequence
\begin{equation}\label{eqn:SS*1}
0 \to \CH^j_{\G_m}(k,i) \to  \CH^j_{\G_m}(\G_m,i) \to  \CH^{j-1}_{\G_m}(k,i-1)
\to 0.
\end{equation}
We have a similar split exact sequence of ordinary higher Chow groups. 

Since $B\G_m \simeq {\P}^{\infty}_k$, the projective bundle formula for the 
ordinary Chow group yields
\[
\CH^*_{\G_m}(k,1) \cong \Z[t] {\otimes}_{\Z} \CH^*(k,1) \ {\rm and}
\]
\[
\CH^*_{\G_m}(k,0) \cong \Z[t] {\otimes}_{\Z} \CH^*(k,0).
\]
Combining these identifications with ~\eqref{eqn:SS*1}, we get
\begin{equation}\label{eqn:SS*3}
\CH^*_{\G_m}(\G_m,1) 
\cong (k^* \otimes_{\Z} \Z[t]) \oplus \Z[t][-1],
\ \CH^*_{\G_m}(\G_m,0) \cong \Z[t],  
\end{equation}
\[
\CH^*(\G_m,1) =  k^* \oplus \Z, \ {\rm and} \
\CH^*(\G_m,0)
= \Z.
\]
In particular, we have 
\begin{equation}\label{eqn:SS*4}
\Z {\otimes}_{\Z[t]} \CH^*_{\G_m}(\G_m,1) \xrightarrow{\simeq} \CH^*(\G_m,1).
\end{equation}

Now we compute $\CH^*(\G_m,1)$ using Theorem~\ref{thm:Sseq-T}.
It is easy to see that
\begin{equation}\label{eqn:E-2-pq}
E^2_{p,q} = \left\{\begin{array}{ll}
{{\Z} {\otimes}_{\Z[t]} \CH^*_{\G_m}(\G_m,q)} & \mbox{if \ $p =0$} \\
{ {(\CH^*_{\G_m}(\G_m,q))}_{\rm tor}} & \mbox{if \ $p=1$} \\
0 & \mbox{otherwise .}
\end{array}
\right.
\end{equation}
This gives an exact sequence
\[
0 \to \Z {\otimes}_{\Z[t]} \CH^*_{\G_m}(\G_m,1) \to \CH^*(\G_m,1)
\to {\CH^*_{\G_m}(\G_m,0))}_{\rm tor} \to 0.
\]
Since the right end term is zero by ~\eqref{eqn:SS*3}, we see that this yields
the same answer as in ~\eqref{eqn:SS*4} (but in a much shorter way).
\end{exm}

\section{Proof of Theorem~\ref{thm:SS-Red-Main-I}: 
the general case}\label{sect:SS-R}
In this section, we shall use \thmref{thm:Sseq-T} to complete the proof of
Theorem~\ref{thm:SS-Red-Main-I}.
We first prove some intermediate results about
the relation between the equivariant Chow groups for the action of
a split reductive group and its maximal tori.
These results are of independent interest.
Let us fix a reductive group $G$, a split maximal torus $T \subset G$ of rank 
$r$ and a Borel subgroup $B \subset G$ containing $T$.
Recall from \S~\ref{sect:MR*} that $t_G$ denotes the
torsion index of $G$. One knows that
$t_G$ is the order of the cokernel of the map 
$r_{G/B}: S(T)_N \to \CH^N(G/B,0) \simeq \Z$, where $N = \dim(G/B)$.

Let $R$ denote the ring
$\Z[t^{-1}_G]$. Since we wish to work with $R$-modules in this section,
we shall assume all abelian groups in this section to be tensored with
$R$ (over $\Z$).  

\begin{comment}
This is a positive integer defined for all split reductive groups, and is 
equal to one, if $G$ is special. That is, every $G$-torsor of $k$-schemes is 
Zariski locally trivial on the base
(e.g., $GL_n$ and $SL_n$). Let $R$ denote the ring
$\Z[t^{-1}_G]$. Since we wish to work with $R$-modules in this section,
we shall assume all abelian groups in this section to be tensored with
$R$ (over $\Z$).  
\end{comment}

\subsection{Reduction to the case of split tori}\label{sect:Red-T}
Recall from ~\eqref{eqn:res}  that for $X \in \Sch^G_k$, 
there is a restriction
map $\CH^*_G(X, \bullet) \to \CH^*_T(X, \bullet)$ which induces a map
of $S(T)$-modules
\begin{equation}\label{eqn:Res-0}
\lambda_X : \CH^*_G(X, \bullet) \otimes_{S(G)} S(T) \to \CH^*_T(X, \bullet).
\end{equation}
This is an $S(T)$-algebra homomorphism if $X$ is smooth.

The forgetful map from the equivariant to the ordinary Chow groups yields an 
$S(G)$-linear map $r_{G/B}: \CH^*_G(G/B) \otimes_{S(G)} R \to \CH^*(G/B)$.
Identifying $\CH^*_G(G/B)$ with $S(T) \simeq S(B)$ 
(see \cite[Corollary~3.2, Proposition~3.3]{Krishna1}),
we get the so called {\sl characteristic} ring homomorphism
\[
r_{G/B}: S(T) \otimes_{S(G)} R \to \CH^*(G/B).
\]

\begin{lem}\label{lem:Char}
The map $r_{G/B}$ is an isomorphism.
\end{lem}
\begin{proof}
This is proven for cobordism in \cite[Corollary~5.2]{KK}
and a similar (also simpler) proof also works for Chow groups.
We present here another (but related) proof.
Let $W = N(T)/T$ denote the Weyl group of $G$. Then
$W$ acts naturally on $S(T) \simeq \CH^*_T(k)$ as $S(G)$-algebra
homomorphisms. Let $S(T)^W$ denote the ring of invariants.
It follows from \cite[Theorem~10.2]{CZ} (and the comments following
the theorem) that the map
$r^T_{G/B}:S(T) \otimes_{{S(T)}^W} S(T) \to \CH^*_T(G/B)$ is an $S(T)$-algebra
isomorphism. On the other hand, it follows from
\cite[Theorem~5.7]{Krishna1} that the map
$S(G) \to {S(T)}^W$ is an isomorphism. 
We conclude that the characteristic map
\begin{equation}\label{eqn:G-Torus}
r^T_{G/B} : S(T) \otimes_{S(G)} S(T) \simeq S(T) \otimes_{S(G)} \CH^*_G(G/B) 
\to \CH^*_T(G/B)
\end{equation}
is an isomorphism.

Since $G/B$ is smooth and projective over $k$, it follows immediately from
\thmref{thm:Sseq-T} that the map $\CH^*_T(G/B) \otimes_{S(T)} R \to \CH^*(G/B)$
is an isomorphism. In particular, we get
\[
S(T) \otimes_{S(G)} R \simeq 
(S(T) \otimes_{S(G)} S(T)) \otimes_{S(T)} R \simeq
\CH^*_T(G/B) \otimes_{S(T)} R \simeq \CH^*(G/B)
\]
and this proves the lemma.
\end{proof}

We wish to prove the following refinement of \thmref{thm:reductiveI}. 
Analogous result for the equivariant $K$-theory
of $GL_n$-action (more generally, groups whose commutator subgroup
is simply connected) was proven by Merkurjev \cite[Proposition~8]{Merkurjev}.

\begin{thm}\label{thm:Sspl-M}
For any $X \in \Sch^G_k$, the map $\lambda_X$ of ~\eqref{eqn:Res-0}
is an isomorphism.
\end{thm}
\begin{proof}
We shall closely follow the proof of \cite[Theorem~8.5]{Krishna1}
but will supply the necessary details.
As shown in {\sl ibid.}, it suffices to show that for any $k$-scheme $Y$
and a $G$-torsor $f: X \to Y$, the natural map
\begin{equation}\label{eqn:Sspl-M-0}
\lambda_{Y} : \CH^*(Y, \bullet) {\otimes}_{S(G)} S(T) \to \CH^*(X/B, \bullet)
\end{equation}
is an isomorphism. 
It suffices to show that $\lambda_Y$ is an isomorphism after localizing
at every prime $p$ not dividing $t_G$. So we fix such a prime $p$.
We shall prove the theorem by noetherian induction on 
$Y$, which we can assume to be reduced.
For any map $Y' \to Y$ in $\Sch_k$, we let $X_{Y'} = X \times_Y Y'$.

%Let us first assume that $Y = \Spec(k)$ and $f$ is the trivial $G$-torsor.
%We then have
%\[
%\begin{array}{lll}
%\CH^*(k, \bullet) \otimes_{S(G)} S(T) & \simeq & 
%\CH^*(k, \bullet) \otimes_{\CH^*(k)} 
%(\CH^*(k) \otimes_{S(G)} S(T)) \\
%& {\simeq}^{1} & \CH^*(k, \bullet) \otimes_{\CH^*(k)} \CH^*(G/B) \\
%& {\simeq}^{2} & 
%\CH^*(G/B, \bullet) 
%\end{array}
%\]
%where ${\simeq}^1$ follows from \lemref{lem:Char} and ${\simeq}^2$ follows
%from \cite[Lemma~6.2]{Krishna1}.
%More generally, 
If $f$ is the trivial $G$-torsor so that 
$G/B \times Y \xrightarrow{f} Y$ is the trivial flag bundle, we get
\[
\begin{array}{lll}
\CH^*(Y, \bullet) \otimes_{S(G)} S(T) & \simeq & \CH^*(Y, \bullet) 
\otimes_{\CH^*(k)} 
\left(\CH^*(k) \otimes_{S(G)} S(T)\right) \\
& {\simeq}^1 & \CH^*(Y, \bullet) \otimes_{\CH^*(k)} \CH^*(G/B) \\
& {\simeq}^2 & \CH^*(Y \times G/B, \bullet),
\end{array}
\]
where ${\simeq}^1$ follows from \lemref{lem:Char} and
${\simeq}^2$ follows from \cite[Lemma~6.2]{Krishna1}.

In general, it follows from \cite[Th\'eor\`eme~2]{Groth} that every $G$-torsor
over any given scheme point of $Y$ becomes trivial after a finite
separable field 
extension of degree prime to $p$. We remark here that this result was
proven by Grothendieck only for the geometric points. It was later generalized 
to the case of all points of $Y$ by Totaro \cite[Theorem~1.1]{Totaro-1}.
We conclude from this that there is a non-empty open subset $j: U \inj Y$
and a finite {\'e}tale cover $g: U' \to U$ of degree prime to $p$
such that $f$ becomes trivial over $U'$.

We have a commutative diagram of
the flat pull-back and proper push-forward maps
\begin{equation}\label{eqn:Sspl-M-1}
\xymatrix@C1pc{
\CH^*(U, \bullet) \otimes_{S(G)} S(T) \ar[r]^-{\lambda_U} \ar[d]_{g^*} &
\CH^*({X_U}/B, \bullet) \ar[d]^{g^*} \\
\CH^*(U', \bullet) \otimes_{S(G)} S(T) \ar[r]^-{\lambda_{U'}} \ar[d]_{g_*} &
\CH^*({X_{U'}}/B, \bullet) \ar[d]^{g_*} \\
\CH^*(U, \bullet) \otimes_{S(G)} S(T) \ar[r]^-{\lambda_U}  &
\CH^*({X_U}/B, \bullet)}
\end{equation}
such that the composite vertical arrows are multiplication by
an integer not divisible by $p$. Since $\lambda_{U'}$ is an
isomorphism after localization at $p$, we conclude that the same holds for
$\lambda_U$ too.

We let $i: Z \inj Y$ be the complement of $U$ and consider the commutative
diagram of localization exact sequences
\begin{equation}\label{eqn:FL-Chow-grps&}
\xymatrix@C.5pc{
{\begin{array}{c}
\CH^*(U, \bullet) \\
{\otimes} \\
S(T)
\end{array}} 
\ar[r]^{\partial} \ar[d]_>>>>>{\lambda_U} & 
{\begin{array}{c}
\CH^*(Z, \bullet) \\
{\otimes} \\
S(T)
\end{array}}
\ar[d]^>>>>>{\lambda_Z} \ar[r]^{\iota_*} &
{\begin{array}{c}
\CH^*(Y, \bullet) \\
{\otimes} \\
S(T)
\end{array}} 
\ar[d]^>>>>>{\lambda_X} \ar[r]^{j^*} &
{\begin{array}{c}
\CH^*(U, \bullet) \\
{\otimes} \\
S(T)
\end{array}} 
\ar[r]^{\partial} \ar[d]^>>>>>{\lambda_U} & 
{\begin{array}{c}
\CH^*(Z, \bullet) \\
{\otimes} \\
S(T)
\end{array}}
\ar[d]^>>>>>{\lambda_Z} \\
\CH^*({X_U}/B, \bullet) \ar[r] & \CH^*({X_Z}/B, \bullet) \ar[r] &
\CH^*(X/B, \bullet) \ar[r] & \CH^*({X_U}/B, \bullet) \ar[r] & 
\CH^*({X_Z}/B, \bullet),}
\end{equation}
where the tensor product in the top row is over the ring $S(G)$.
It follows from \cite[Theorem~5.7]{Krishna1} that the canonical map
$S(G) \to {S(T)}^W$ is a ring isomorphism. 
On the other hand, \cite[Th\'eor\`eme~2]{Dem1} says that
$S(T)$ is a free ${S(T)}^W$-module of finite rank.
It follows that the top row is exact. The second and the third squares 
commute by the compatibility of $\lambda_Y$ with the push-forward and the 
pull-back maps. 

To see that
the first square commutes, let us consider an element $a \otimes b
\in \CH_*(U, \bullet) {\otimes} S(T)$. If we map this horizontally, we get
$\partial(a) \otimes b$ which is mapped vertically down to 
$b \cdot f^*_Z \circ \partial(a)$. Since the localization sequence
of higher Chow groups is compatible with respect to the flat pull-back,
this last term is same as $b \cdot \partial \circ f^*_U (a)$.
On the other hand, mapping $a \otimes b$ vertically down gives $b \cdot    
f^*_U(a)$ and if we map this horizontally, we get 
$\partial\left(b \cdot f^*_U(a)\right)$. Since the horizontal maps
in the bottom row are $S(T)$-linear (see \cite[Proposition~2.2]{Krishna1}), 
we conclude that the first (hence the fourth) square commutes. 

We have shown above that (after localization at $p$)
the first and the fourth vertical arrows 
in ~\eqref{eqn:FL-Chow-grps&} are isomorphisms. 
The second and the fifth vertical arrows are isomorphisms by 
noetherian induction. We conclude from 5-lemma that $\lambda_Y$ is an 
isomorphism. 
\end{proof}

\vskip .3cm

{\sl Proof of \thmref{thm:SS-Red-Main-I}:}
Let $X \in \Sm^G_k$ and let $C_{\bullet} \surj \CH^*_G(X, q)$ be a free 
$S(G)$-module resolution of $\CH^*_G(X, q)$ for an integer $q \ge 0$. 
We saw during the proof of \thmref{thm:Sspl-M} that $S(T)$ is
a free $S(G)$-module of finite rank.
We can thus apply \thmref{thm:Sspl-M} to conclude that $C_{\bullet} 
{\otimes}_{S(G)} S(T)$ is a free $S(T)$-module resolution of
$\CH^*_T(X, q)$. This in turn implies that
\begin{equation}\label{eqn:General}
{\rm Tor}^{S(G)}_p(R, \CH^*_G(X, q)) 
\xrightarrow{\simeq} {\rm Tor}^{S(T)}_p(R, \CH^*_T(X, q))
\end{equation}
for all $p, q \ge 0$. We conclude the proof by \thmref{thm:Sseq-T}.
$\hfill\square$

\subsection{Applications of \thmref{thm:SS-Red-Main-I}:}
\label{sect:Applns}
We conclude this section with some important applications of 
\thmref{thm:SS-Red-Main-I}. Our first application is the following
explicit relation between the equivariant and the ordinary Chow groups
of schemes with group action. Its significance comes from the fact that
the equivariant Chow groups are often simpler objects to deal with
and there are various techniques to compute them (e.g., see \cite{Brion}).
So the result below allows us to compute (the more difficult) ordinary Chow
groups of such schemes. When $G$ is a split torus, this recovers 
\cite[Corollary~2.3]{Brion}. The proof is immediate from the
spectral sequence ~\eqref{eqn:SS-M-01}.

\begin{cor}\label{cor:Chow0}
Let $G$ be a split reductive group acting on a smooth 
quasi-projective scheme $X$ over $k$. Then the forgetful map
\[
\CH^*_G(X) \otimes_{S(G)} R \to \CH^*(X)
\]
is an isomorphism of $R$-modules. 
This is a ring isomorphism if $X$ is smooth over $k$.
\end{cor}

The second application of Theorem~\ref{thm:SS-Red-Main-I} is the following 
explicit relation between equivariant and 
ordinary higher Chow groups of schemes with $\G_m$-action.
The proof follows easily using the computations in
~\eqref{eqn:E-2-pq}.
\begin{cor}\label{cor:G-m-action}
For a smooth quasi-projective scheme $X$ over $k$ with $\G_m$-action and 
an integer $n \ge 0$, there is a natural 
exact sequence
\[
0 \to \Z {\otimes}_{\Z[t]} \CH^*_{\G_m}(X,n) \to \CH^*(X,n) \to
{(\CH^*_{\G_m}(X,n-1))}_{\rm tor} \to 0.
\]
\end{cor}

\subsubsection{Higher Chow groups of principal and flag bundles}
\label{sect:P-bundles}
Let $X$ be a smooth quasi-projective $k$-scheme and let $f: E \to X$ be a torsor
for a split reductive group $G$. Let $B$ be a Borel subgroup of $G$ 
containing a split maximal torus $T$. An important result of Vistoli 
\cite[Theorem~3.1]{Vistoli} in the intersection theory of principal 
and flag bundles says that the pull-back map $f^*$ induces an isomorphism
\[
\CH^*(X)_{\Q} \otimes_{S(G)_{\Q}} S(T)_{\Q} \xrightarrow{\simeq}
\CH^*(E/B)_{\Q}.
\]
Using this, Vistoli \cite[Corollary~3.2]{Vistoli} obtains an isomorphism
\[
\CH^*(X)_{\Q} \otimes_{S(G)_{\Q}} \Q \xrightarrow{\simeq} \CH^*(E)_{\Q}.
\]

As our next application, we obtain the following full generalizations and 
refinements of above two results. 
The part (1) of the result below follows at once from \thmref{thm:Sspl-M}
and \propref{prop:EHCG}(7). The part (2) follows immediately from
\thmref{thm:SS-Red-Main-I}, \corref{cor:Chow0} and \propref{prop:EHCG}(7).

\begin{thm}\label{thm:G-torsor}
Given a $G$-torsor $f: E \to X$ as above, the following hold.
\begin{enumerate}
\item
For every $q \ge 0$, the natural map
\[\CH^*(X,q)
{\otimes}_{S(G)} {S(T)} \to \CH^*(E/B, q)
\]
%\end{equation}
is an isomorphism.
\item
There is a convergent spectral sequence
%\begin{equation}\label{eqn:G-torsor-1}
\[
E^2_{p,q} = {\rm Tor}^{S(G)}_p (R, \CH^*(X, q))
\Rightarrow \CH^*(E, p+q).
\]
%\end{equation}
The edge homomorphism induces an isomorphism
\[
\CH^*(X)
{\otimes}_{S(G)} R \xrightarrow{\simeq} \CH^*(E).
\]
\end{enumerate}
\end{thm}

\section{Degeneration of spectral sequence for projective 
schemes}\label{sect:Proj}
In this section, we shall prove the degeneration of the spectral
sequence ~\eqref{eqn:SS-M-01} for smooth projective schemes.
Our main tool to prove this is  
the following stratification theorem  for the action of a split torus.

\begin{thm}[Bialynicki-Birula, Hesselink, Iverson]\label{thm:BBH}
Let $T$ be a split torus acting on a smooth projective scheme $X$ over a
field $k$. Then:
\begin{enumerate}
\item
The fixed point locus $X^T$ is a smooth closed subscheme of X. 
\item
There is an ordering $X^T = \stackrel{n}{\underset{j=0}{\coprod}}
Z_j$ of the connected components of $X^T$, a 
filtration of $X$ by $T$-invariant closed subschemes
\[
{\emptyset} = X_{-1} \subset X_0 \subset \cdots \subset X_n = X
\]
and maps ${\phi}_j : X_j \setminus X_{j-1} \to Z_j$ for $0 \le j \le n$ which
are all $T$-equivariant affine bundles. 
\end{enumerate}
\end{thm}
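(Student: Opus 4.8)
The plan is to reduce the assertion for the torus $G$ to the case of a single $\G_m$ and then carry out the attracting-cell (Bialynicki--Birula) construction over the possibly non-closed field $k$. First I would choose a cocharacter $\lambda\colon\G_m\to G$, defined over $k$ because $G$ is split, such that $X^\lambda=X^G$; such cocharacters exist for any torus action on a scheme of finite type (on each $G$-invariant affine chart only finitely many weight spaces are needed to generate the coordinate ring, so one avoids finitely many hyperplanes in the cocharacter lattice). Since $G$ is abelian it commutes with $\lambda$, so every $\lambda$-equivariant construction below is automatically $G$-equivariant, and it suffices to treat the $\G_m$-action defined by $\lambda$.

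For part $(i)$, that $X^G$ is a closed subscheme is general for a diagonalizable group acting on a separated scheme; the content is smoothness. Near a point $x$ of $X^G$, after the harmless base change to the residue field $\kappa(x)$ (over which $G$ remains split), I would linearize the action formally: a weight-homogeneous basis of $\fm_x/\fm_x^2$ lifts, by smoothness of $X$, to a $\G_m$-equivariant isomorphism of $\wh{\sO}_{X,x}$ with a power series ring on the weight spaces, under which the ideal of $X^G$ becomes the ideal generated by the coordinates of nonzero weight. Hence $X^G$ is regular at $x$; smoothness and closedness then descend back to $k$. This base-field bookkeeping is essentially the Hesselink refinement of the classical (algebraically closed) argument.

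For part $(ii)$, given $x\in X$ the orbit map $t\mapsto\lambda(t)\cdot x$ extends uniquely, by the valuative criterion applied to the projective variety $X$, to a morphism $\A^1\to X$; I set $\lim_{t\to0}\lambda(t)x$ equal to its value at $0$, a point of $X^\lambda=X^G$. With $Z_0,\dots,Z_n$ the connected components of $X^G$, put $X_j^+=\{x\in X:\lim_{t\to0}\lambda(t)x\in Z_j\}$; these are $G$-invariant locally closed subschemes covering $X$, and the limit defines a morphism $\phi_j\colon X_j^+\to Z_j$ (the extension over $\A^1$ works in families over $X_j^+$). Comparing $\phi_j$ with the formal linearization of $(i)$, the cell $X_j^+$ is identified $G$-equivariantly with the total space of the subbundle of $N_{Z_j/X}$ on which $\lambda$ acts with positive weights, and $\phi_j$ with the bundle projection; this uses smoothness of $X$ and is the crux. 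Finally I would order the $Z_j$ so that $\overline{X_l^+}\subseteq\bigcup_{m\le l}X_m^+$ --- such an ordering exists because the specialization relation among the cells is a strict partial order, seen by playing $\lambda$ against $\lambda^{-1}$ and using properness --- and set $X_j=\bigcup_{l\le j}X_l^+$; one checks $X_j$ is a $G$-invariant closed subscheme with $X_j-X_{j-1}=X_j^+$ carrying the vector bundle $\phi_j$, which is exactly $(ii)$.

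The main obstacle I expect is the vector-bundle assertion for $\phi_j$. The limit construction readily shows that $\phi_j$ is smooth with affine-space fibers, but upgrading this to an honest $G$-equivariant vector bundle requires globalizing the formal linearization along each component $Z_j$, and over an imperfect or non-closed field one must verify that the splitting of $N_{Z_j/X}$ into $\lambda$-weight subbundles, and likewise the triangular ordering of the cells, is defined over $k$ (equivalently, Galois-stable). This is precisely what the hypothesis that $G$ is a split torus provides, and it is the heart of the Hesselink half of the statement.
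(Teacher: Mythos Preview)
The paper does not prove this theorem at all. Theorem~\ref{thm:BBH} is stated as a quotation from the literature: the paper explicitly introduces it as ``the stratification theorem of Bialynicki-Birula \cite{BB} over algebraically closed fields and its improvement by Hesselink \cite{Hessel} to any base field'', adds a pointer to \cite[Theorem~5.15]{VV} for the precise form used, and then moves on to use it as a black box in the proof of Theorem~\ref{thm:GeneralSplit}. There is no proof in the paper to compare your proposal against.

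That said, your outline is a faithful sketch of the standard Bialynicki--Birula/Hesselink argument: reduction to a single $\G_m$ via a generic cocharacter of the split torus, formal linearization at fixed points to get smoothness of $X^G$ and the local vector-bundle structure of the attracting sets, the limit construction of $\phi_j$ using properness, and the partial-order argument to produce the filtration. Your identification of the delicate point---globalizing the formal linearization to an honest $G$-equivariant vector bundle over each $Z_j$, and checking that everything is defined over $k$---is exactly where Hesselink's contribution lies. So the sketch is sound as an account of the cited result, but it is not something the paper itself undertakes; for the purposes of this paper you should simply cite \cite{BB}, \cite{Hessel}, and \cite[Theorem~5.15]{VV} as the author does.
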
  

The part (1) of the above theorem was proven by Iverson \cite{Iverson}
when $k$ is algebraically closed. Both parts of the theorems were proven
by Bialynicki-Birula \cite{BB} when $k$ is algebraically closed.
In this case, Bialynicki-Birula \cite{BB} actually showed that
maps ${\phi}_j : X_j \setminus X_{j-1} \to Z_j$ for $0 \le j \le n$ which
are all $T$-equivariant vector bundles.
The most general case of the theorem was proven by Hesselink \cite{Hessel}.

\begin{prop}\label{prop:GeneralSplit}
Let $T$ be a split torus acting on a smooth projective scheme $X$ over $k$. 
Let $Z_j$'s be the connected components of $X^T$ as in Theorem~\ref{thm:BBH}. 
Then for every $q \ge 0$, there is an isomorphism of $S(T)$-modules
\[
\stackrel{n}{\underset{j=0}{\oplus}} \CH^*_T(Z_j, q)
\xrightarrow{\simeq} \CH^*_T(X, q).
\]
\end{prop} 
\begin{proof} 
We prove the proposition by induction on the length of the filtration.
For $n = 0$, the inclusion $Z_0 \inj X_0$ is the
0-section embedding of the $T$-equivariant affine bundle
$X = X_0 \xrightarrow{{\phi}_0} Z_0$. Hence, the proposition follows from the
homotopy invariance (see Remark~\ref{remk:shift}).

We now assume by induction that $1 \le m \le n$ and 
we have an $S(T)$-module decomposition 
\begin{equation}\label{eqn:split0}
\stackrel{m-1}{\underset{j=0}{\oplus}} \CH^*_T(Z_j,q)\xrightarrow{\simeq} 
\CH^*_T(X_{m-1},q).
\end{equation}

The localization exact sequence for the 
inclusions $i_{m-1} : X_{m-1} \inj X_m$ and $j_{m} : W_{m} = 
X_m \setminus X_{m-1}\inj X_m$ 
of the $T$-invariant closed and open 
subschemes yields a long exact sequence of $S(T)$-linear maps

\begin{equation}\label{eqn:split*1}
\cdots \to \CH^*_T(X_{m-1},q) \xrightarrow{i_{(m-1)*}}
\CH^*_T(X_m,q) \xrightarrow{j^*_m}
\CH^*_T(W_m,q) \xrightarrow{\partial} 
\CH^*_T(X_{m-1}, q-1) \to \cdots.
\end{equation}
Using ~\eqref{eqn:split0}, it suffices now to construct an 
$S(T)$-linear splitting 
of the pull-back $j^*_m$ in order to prove the proposition.  

Let $V_m \subset W_m \times Z_m$ be the graph of the projection 
$W_m \xrightarrow{{\phi}_m} Z_m$ and let $Y_m$ 
denote the closure of $V_m$ in $X_m \times Z_m$.  
Then $Y_m$ is a $T$-invariant closed subset of $X_m \times Z_m$ and $V_m$
is $T$-invariant and open in $Y_m$. 
We consider the composite maps 
\begin{equation}\label{eqn:split01}
p_m : V_m \inj W_m \times Z_m \to W_m, \ \ 
q_m : V_m \inj W_m \times Z_m \to Z_m \ \ {\rm and} 
\end{equation}
\[
{\ov{p}}_m : Y_m \inj X_m \times Z_m \to X_m, \ \ 
{\ov{q}}_m : Y_m \to X_m \times Z_m \to Z_m 
\]
in $\Sch^T_k$. Note that ${\ov{p}}_m$ is a projective morphism since $Z_m$ is 
projective. The map $q_m$ is smooth and $p_m$ is an isomorphism. 

We consider the diagram 
\begin{equation}\label{eqn:split1}
\xymatrix{
{\CH^*_T\left(Z_m,q\right)} \ar[r]^{{\ov{q}}^*_m} 
\ar[d]_{{\phi}^*_m}^{\simeq} &
{\CH^*_T\left(Y_m,q\right)} \ar[d]^{{{\ov{p}}_m}_*} \\
{\CH^*_T\left(W_m,q\right)} & 
{\CH^*_T\left(X_m,q\right)} \ar[l]^{j^*_m}.}
\end{equation} 

The map ${{\phi}^*_m}$ is an isomorphism by the homotopy invariance. 
We also observe that the map ${\ov{q}}^*_m$ exists since $Z_m$ is smooth
(see \propref{prop:EHCG}(1)).
It suffices to show that this diagram commutes. For, the map 
$s_m : = {{\ov{p}}_m}_* \circ {\ov{q}}^*_m \circ
{{\phi}^*_m}^{-1}$ will then give the desired splitting of the map
$j^*_m$. Note that $s_m$ is $S(T)$-linear since so are all the maps in
~\eqref{eqn:split1}.

We now consider the commutative diagram in $\Sch^T_k$:
\[
\xymatrix@C3pc{
X_m & W_m \ar[l]_{j_m}& \\
Y_m \ar[u]^{{\ov{p}}_m} \ar[dr]_{{\ov{q}}_m} & V_m \ar[u]_{p_m} \ar[d]^{q_m}
\ar[l]_{{\ov{j}}_m} & W_m \ar[ul]_{id} 
\ar[l]^{(id, {\phi}_m)} \ar[dl]^{{\phi}_m} \\
& Z_m. & }
\]
Since the top left square is Cartesian with ${\ov{p}}_m$ projective and 
$j_m$ an open immersion, it follows that $j^*_m \circ {{\ov{p}}_m}_*  = 
{p_m}_* \circ {\ov{j}}^*_m$. 
Now, using the fact that $(id, {\phi}_m)$ is an isomorphism, 
we get 
\[
\begin{array}{lllll}
j^*_m \circ {{\ov{p}}_m}_* \circ {\ov{q}}^*_m & = & 
{p_m}_* \circ {\ov{j}}^*_m \circ {\ov{q}}^*_m & = &  
{p_m}_* \circ q^*_m \\
& = & {p_m}_* \circ {(id, {\phi}_m)}_* \circ {(id, {\phi}_m)}^* \circ
q^*_m & = & {id}_* \circ {\phi}^*_m \\
& = &  {\phi}^*_m. & &  
\end{array} 
\]
This proves the commutativity of ~\eqref{eqn:split1} and hence the 
proposition.
\end{proof}

\begin{remk}\label{remk:GeneralSplit*}
We remark that the non-equivariant version of \propref{prop:GeneralSplit}
is known to exist in the literature in various forms. 
For instance, in view of the comparison of 
Chow motives and Voevodsky's derived category of motives and the 
relation between the motivic cohomology and higher Chow groups,
the non-equivariant version follows from the results of
Brosnan \cite{Brosnan}. The results of Brosnan build on the earlier work of
Karpenko \cite{Karpenko}, generalized by Chernousov-Gille-Merkurjev
\cite{CGM} (see also \cite{DB}).
\end{remk}

Using \propref{prop:GeneralSplit}, we  obtain the following explicit
relation between equivariant and ordinary higher Chow groups of
smooth projective schemes with torus action. The case $q = 0$ of this
result was earlier 
proven by Brion (see \cite[Corollaries~2.3 and 3.1]{Brion}). 

\begin{thm}\label{thm:split} Let $T$ be a split torus acting on a 
smooth projective scheme $X$ over $k$. Then for every $q \ge 0$, there is  
an isomorphism of $S(T)$-modules
\begin{equation}\label{eqn:DSS0}
\CH^*(X, q) {\otimes}_{\Z} S(T) 
\xrightarrow{\simeq} \CH^*_T(X, q).
\end{equation}
\end{thm}
\begin{proof}
Since $T$ acts trivially on each $Z_j$, it follows from
Proposition~\ref{prop:GeneralSplit} and \cite[Theorem~2.4]{KrishnaV} that
there are $S(T)$-module isomorphisms
\[
({\stackrel{n}{\underset{j=0}{\oplus}} \CH^*(Z_j, q)}) 
{\otimes}_{\Z} S(T) \simeq \ \
\stackrel{n}{\underset{j=0}{\oplus}} (\CH^*(Z_j, q)
{\otimes}_{\Z} S(T))
%\]
%\[
%\hspace*{6cm} 
\xrightarrow{\simeq} 
\stackrel{n}{\underset{j=0}{\oplus}} \CH^*_T(Z_j, q)
\xrightarrow{\simeq} \CH^*_T(X, q).
\] 
On the other hand, if we apply Proposition~\ref{prop:GeneralSplit}
by taking $T = \{1\}$, we see that 
the top left term is same as $\CH^*(X, q)
{\otimes}_{\Z} S(T)$. This yields ~\eqref{eqn:DSS0}.
\end{proof}

The following generalization of \cite[Theorem~3.2]{Brion} to the 
equivariant higher Chow groups is immediate from Theorem~\ref{thm:split}. 

\begin{cor}\label{cor:split*} 
Let $T$ be a split torus acting on a smooth projective scheme $X$ over $k$. 
Then for every $q \ge 0$, the ${S(T)}_{\Q}$-module 
${\CH^*_T(X, q)}_{\Q}$ is free.
\end{cor}

As an application of \thmref{thm:split}, we now prove the following
main result of this section. Recall from \S~\ref{sect:SS-R}
that $t_G$ denotes the torsion index of a split reductive
group $G$.

\begin{thm}\label{thm:Degn}
Let $G$ be a split reductive group acting on a smooth projective
scheme $X$ over $k$ and let $q \ge 0$ be any integer.
Then the natural map
\[
\CH^*_G(X,q) \otimes_{S(G)} \Z[t^{-1}_G] \to \CH^*(X,q)[t^{-1}_G]
\]
is an isomorphism.
If $G$ is not necessarily split, 
then this map is an isomorphism with rational coefficients.
\end{thm}
\begin{proof}
We assume throughout the proof that we work with $\Z[t^{-1}_G]$-modules.
In view of \thmref{thm:SS-Red-Main-I}, we only need to show that
$E^2_{p,q} = 0$ for $p \neq 0$ in the spectral sequence ~\eqref{eqn:SS-M-01}.
We first assume that $G= T$ is a split torus. Let 
$C_{\bullet} \surj \CH^*(X, q)$ be a free resolution of
$\CH^*(X,q)$ as $\Z$-module. Since $S(T)$ is a polynomial algebra over
$\Z$, it follows from \thmref{thm:split} that $C_{\bullet} \otimes_{\Z} S(T)$ 
is a free resolution of $\CH^*_T(X,q)$ as $S(T)$-module. Using this
resolution, it follows immediately that $E^2_{p,q} = 0$ for $p \neq 0$.

We next let $G$ be a split reductive group with a split maximal torus $T$
and let $C_{\bullet} \surj \CH^*_G(X, q)$ be a free resolution of
$\CH^*_G(X,q)$ as $S(G)$-module. It follows from \thmref{thm:Sspl-M} that
$C_{\bullet} \otimes_{S(G)} S(T)$ is a free resolution of $\CH^*_T(X,q)$ as 
$S(T)$-module. Since the augmentation map $S(G) \to \Z$ factors through
$S(G) \to S(T) \to \Z$, it follows from the case of torus that
$E^2_{p,q}(G) = E^2_{p,q}(T) = 0$ for $p \neq 0$.

Suppose now that $G$ is a reductive group which is not necessarily
split over $k$.
By \cite[Expos{\'e}~XXII, Corollaire~2.4]{SGA3},
there is a finite Galois extension $l/k$ with Galois group $\Gamma$
such that $G_l$ is a split reductive group. Let $\CH^*_G\left(X_l, i \right)$
denote the equivariant higher Chow groups of $X_l$ for the action of
$G_l$.  It then follows from \cite[Lemma~3.3 and Remark~3.4]{KrishnaV} that 
$\Gamma$ acts on $\CH^*_G(X_l, i)$ and $\CH^*_G(X, i)$
is its Galois invariant. The same holds for $\CH^*(X, i)$.
Let ${tr}: \CH^*_G(X_l, i) \to \CH^*_G(X, i)$ denote 
the trace map 
\[
tr(x) = \frac{1}{|{\Gamma}|} {\underset{\gamma \in \Gamma}{\Sigma}} 
{\gamma}(x).
\]

Let $\ov{J_G \CH^*_G(X, i)}$ denote the image of 
$J_{G_l}\CH^*_G(X_l, i)$ under the trace map. Then we get a commutative diagram
\begin{equation}\label{eqn:SIR*}
\xymatrix@C1pc{
0 \ar[r] & J_G \CH^*_G(X, i) \ar[r] \ar@{^{(}->}[d] &
\CH^*_G(X, i) \ar[r] \ar@{^{(}->}[d] & \CH^*(X, i) 
\ar@{^{(}->}[d] \ar[r] & 0 \\
0 \ar[r] & J_{G_l} \CH^*_G(X_l, i) \ar[r] \ar[d] & 
\CH^*_G(X_l, i) \ar[r] \ar[d]_{tr} & \CH^*(X_l, i) \ar[r] 
\ar[d]^{tr} & 0 \\
& {\ov{J_G \CH^*_G(X, i)}} \ar@{^{(}->}[r] & 
\CH^*_G(X, i) \ar[r] & \CH^*(X, i). & }
\end{equation}

The composite vertical maps in the middle and the right columns are
the identity by the definition of the trace map. Hence the composite left 
vertical map is also the identity. The middle row is exact
since $G_l$ is split. A simple diagram chase now shows that the top row must 
also be exact, which proves the theorem.
\end{proof}

\section{Applications to equivariant $K$-theory}
\label{sect:EKT}
We shall now apply Theorems~\ref{thm:SS-Red-Main-I} and ~\ref{thm:Degn-*}
to answer some well known questions in equivariant $K$-theory of smooth 
projective schemes. Since all our results in this section will be with rational
coefficients, our convention will be that an abelian
group $A$ will actually mean the group $A {\otimes}_{\Z} {\Q}$.

\subsection{The forgetful map in $K$-theory}\label{sect:Foget-map}
To prove the surjectivity of the forgetful map from the equivariant to the
ordinary $K$-theory of schemes, we begin with the following result. 
For a linear algebraic group $G$, let $\widehat{R(G)}$ denote the $I_G$-adic 
completion of the ring $R(G)$. Similarly, we denote the $J_G$-adic completion
of the ring $S(G)$ by  $\widehat{S(G)}$. 
One of the consequences of the
Riemann-Roch theorem of \cite{ED1} is that there is a ring isomorphism 
\begin{equation}\label{eqn:EDT}
{ch}^G_k : \widehat{R(G)} \xrightarrow{\simeq}  \widehat{S(G)}.
\end{equation}

Recall from \S~\ref{sec:Intro} that for a linear algebraic group $G$ acting on 
a smooth scheme $X$, $\widehat{K^G_i(X)}$ denotes the $I_G$-adic completion of 
$K^G_i(X)$. Let $\widehat{\CH^*_G(X, i)}$
denote the $J_G$-adic completion of the $S(G)$-module
$\CH^*_G(X, i)$. We define the $\widehat{R(G)}$-module
$\wt{K^G_i(X)}$ to be the group $K^G_i(X) {\otimes}_{R(G)} 
\widehat{R(G)}$. We similarly define the $\widehat{S(G)}$-module 
$\wt{\CH^*_G(X, i)}$ to be the group
${\CH^*_G(X, i)} {\otimes}_{S(G)} \widehat{S(G)}$.
We recall from \cite[Theorem~1.2]{KrishnaV} that there
is a functorial Chern character map
\begin{equation}\label{eqn:KV}
{ch}^G_X : K^G_i(X) \to
\wt{\CH^*_G(X, i)}.
\end{equation}

Given a closed subgroup $H \subseteq G$, let
$\CH^*_G(X, i) \xrightarrow{s^G_H}
\CH^*_H(X, i)$ and $K^G_i(X) \xrightarrow{r^G_H} K^H_i(X)$
denote the restriction maps on equivariant Chow groups
and $K$-theory.

\begin{lem}\label{lem:Commute*}
Let $G$ be a linear algebraic group acting on a smooth quasi-projective
scheme $X$ over $k$ and let $H \subseteq G$ be a closed subgroup. 
Then there is a commutative  diagram
\[
\xymatrix@C2pc{
K^G_i(X) \ar[r]^{{ch}^G_X} \ar[d]_{r^G_H} &
{\wt{\CH^*_G(X, i)}} \ar[d]^{s^G_H} \\
K^H_i(X) \ar[r]_{{ch}^H_X} &
{\wt{\CH^*_H(X, i)}}.}
\] 
\end{lem}
\begin{proof}
By \cite[Corollary~3.10]{KrishnaV}, the map 
$\wt{\CH^*_G(X, i)} \to 
\stackrel{\infty}{\underset{j= 0}{\prod}} \CH^j_G(X, i)$
is injective. Hence we can replace the former by the latter group in the
above diagram. 
%The lemma now follows from \cite[Corollary~5.7]{KrishnaV}.
Now, it suffices to check that for any $\alpha \in
K^G_i(X)$ and any $j \ge 0$, the $j$-th components of $s^G_H \circ
{ch}^G_X (\alpha)$ and ${ch}^H_X \circ r^G_H (\alpha)$ are same
in the product $\stackrel{\infty}{\underset{j= 0}{\prod}} 
\CH^j_H(X, i)$. So we fix $j \ge 0$ and choose a good pair
$(V,U)$ for the $G$-action corresponding to $n \gg j$. Then $(V,U)$ is
a good pair for the $H$-action as well.
From the construction of the equivariant Chern character map
(see \cite[Proposition~4.1]{KrishnaV}), it suffices now to show that
the diagram
\[
\xymatrix{
K^G_i(X) \ar[r] \ar[d] & K_i(X \stackrel{G}{\times} U) \ar[r] \ar[d] &
\CH^*(X \stackrel{G}{\times} U, i) \ar[d] \\
K^H_i(X) \ar[r] & K_i(X \stackrel{H}{\times} U) \ar[r] &
\CH^*(X \stackrel{H}{\times} U, i)}
\]
commutes. 
Let us quickly recall the left horizontal arrows.
A $G$-equivariant vector bundle $\sF$ on $X$
pulls back to a $G$-equivariant vector bundle $\pi^*(\sF)$ on $X \times U$ 
via the projection $X \times U \xrightarrow{\pi} X$. 
The faithfully flat descent associated to the $G$-torsor
$X\times U \to X \stackrel{G}{\times} U$ then defines a unique
vector bundle $\pi^*{\sF}//G$ on $X \stackrel{G}{\times} U$.
This assignment at the level of the exact categories of vector bundles
defines the map $K^G_*(X) \to K_*(X \stackrel{G}{\times} U)$ on $K$-theory.

The left square in the above diagram clearly commutes, and the right square
commutes by the contravariance property of the non-equivariant Chern
character for the maps between smooth schemes.
\end{proof}

\begin{thm}\label{thm:forgetful*}
Let $G$ be a reductive group acting on a smooth 
projective scheme $X$ over $k$. Then for all $i \ge 0$, the forgetful map 
$K^G_i(X) \to K_i(X)$ induces an isomorphism
\begin{equation}\label{eqn:char}
{K^G_i(X)}/{I_GK^G_i(X)} \xrightarrow{\simeq} K_i(X)
\end{equation}
with rational coefficients. In particular, the natural map
${K^G_i(X)}_{\Q} \to {K_i(X)}_{\Q}$ is surjective.
\end{thm}
\begin{proof}
Applying Lemma~\ref{lem:Commute*} for $H = \{1\}$ and 
noting that the forgetful maps have factorizations
\[
K^G_i(X) \to K^G_i(X) {\otimes}_{R(G)} {\Q} \to K_i(X), 
\]
\[
{\wt{\CH^*_G(X, i)}} \to 
{\wt{\CH^*_G(X, i)}} {\otimes}_{\widehat{S(G)}} {\Q}
\to \CH^*(X, i),
\]
we get a diagram 
\begin{equation}\label{eqn:Commute*1}
\xymatrix{
K^G_i(X) \ar[r]^{{ch}^G_X} \ar@{->>}[d] &
{\wt{\CH^*_G(X, i)}} \ar@{->>}[d] \\ 
{K^G_i(X) {\otimes}_{R(G)} {\Q}} \ar[d]_{r^G_X} \ar@{.>}[r] &
{{\wt{\CH^*_G(X, i)}} {\otimes}_{\widehat{S(G)}} {\Q}}
\ar[d]^{s^G_X} \\
K_i(X) \ar[r]_{{ch}_X} &
{\CH^*(X, i)}.}
\end{equation}

Next we apply \cite[Theorem~1.2]{KrishnaV} to see that   
the map ${ch}^G_X$ of ~\eqref{eqn:KV} factors as
\begin{equation}\label{eqn:KV*}
K^G_i(X) \to \wt{K^G_i(X)} \xrightarrow{{\wt{ch}}^G_X}
{\wt{\CH^*_G\left(X, i \right)}}
\end{equation}
such that ${{\wt{ch}}^G_X}$ is an isomorphism of 
$\widehat{R(G)}$-modules.
In particular, ${{\wt{ch}}^G_X}$ and hence ${ch}^G_X$ takes
$I_GK^G_i(X)$ to $J_G {\wt{\CH^*_G(X, i)}}$. This 
shows that the map ${ch}^G_X$ in ~\eqref{eqn:Commute*1} descends to
a map 
\[
{K^G_i(X) {\otimes}_{R(G)} {\Q}} \xrightarrow{{\ov{ch}}^G_X}
{{\wt{\CH^*_G(X, i)}} {\otimes}_{\widehat{S(G)}} {\Q}}
\]
such that the above diagram commutes.

We claim that ${{\ov{ch}}^G_X}$ is an isomorphism. For this, we first note
that ${{\wt{ch}}^G_X}$ is an isomorphism of $\widehat{R(G)}$-modules
by \cite[Theorem~1.2]{KrishnaV}. The claim now follows by tensoring both 
sides of ${{\wt{ch}}^G_X}$ with ${\widehat{R(G)}}/{I_G{\widehat{R(G)}}}
\simeq {\widehat{S(G)}}/{J_G{\widehat{S(G)}}} \simeq {\Q}$ and then using 
the isomorphisms
\[
K^G_i(X) {\otimes}_{R(G)} {\Q} \xrightarrow{\simeq}
K^G_i(X) {\otimes}_{R(G)} ({R(G)/{I_G}}) 
\xrightarrow{\simeq} \wt{K^G_i(X)} {\otimes}_{\widehat{R(G)}} 
({\widehat{R(G)}/{I_G\widehat{R(G)}}}).
\]

It follows from \thmref{thm:Degn} and the isomorphism
\begin{equation}\label{eqn:SIR}
\CH^*_G(X, i) {\otimes}_{S(G)} {\Q} \xrightarrow{\simeq}
{{\wt{\CH^*_G(X, i)}} {\otimes}_{\widehat{S(G)}} {\Q}}
\end{equation}
that the map $s^G_X$ in the diagram ~\eqref{eqn:Commute*1} is an
isomorphism. The bottom horizontal map ${ch}_X$ in this diagram is an 
isomorphism by the non-equivariant Riemann-Roch theorem 
\cite[Theorem~9.1]{Bloch}. We conclude that the map $r^G_X$ is also
an isomorphism. In particular, the forgetful map $K^G_i(X) \to K_i(X)$
is surjective. This finishes the proof of the theorem.

\end{proof}

\begin{exm}\label{exm:Forget-2}
This example shows that the projectivity assumption is essential
in \thmref{thm:forgetful*}. Let $\G_m$ act on itself by multiplication.
Then the quotient map $\pi: \G_m \to {\G_m}/{\G_m}$ is same as the structure map
$\pi: \G_m \to \Spec(k)$. In particular, it follows from 
\cite[Proposition~A.1(6)]{KrishnaV} that the forgetful map
$K^{\G_m}_i(\G_m) \to K_i(\G_m)$ is same as the pull-back map
$\pi^*: K_i(k) \to K_i(\G_m)$. So it suffices to show that $\pi^*$ is not
surjective with $\Q$-coefficients. 

The open embedding $j: \G_m \inj \A^1_k$ gives us the localization 
sequence
\[
0 \to K_1(\A^1_k) \xrightarrow{j^*} K_1(\G_m) \to K_0(k) \to 0.
\]
The homotopy invariance shows that $j^*$ can be identified with the
pull-back map $\pi^*: K_1(k) \to K_1(\G_m)$. Any choice of a $k$-rational point
of $\G_m$ now shows that the above sequence is split exact.
We conclude that $\pi^*$ is not surjective since $K_0(k)_{\Q} \simeq \Q$.
\end{exm}

\subsection{Equivariant Riemann-Roch theorem}
\label{sect:ERR}
We now apply \thmref{thm:Degn} to generalize the equivariant
Riemann-Roch theorem of Edidin-Graham \cite[Theorem~4.1]{ED1}
to higher $K$-theory of smooth projective schemes.
In addition, we shall prove this Riemann-Roch theorem for all smooth
quasi-projective toric varieties.
Recall from \cite[Theorem~4.6]{KrishnaV} that for every $i \ge 0$, there is a 
Riemann-Roch map
\begin{equation}\label{eqn:ERR-*}
\tau^G_X: \widehat{K^G_i(X)} \to 
\stackrel{\infty}{\underset{j= 0}{\prod}} {\CH^j_G(X, i)}.
\end{equation}

\begin{thm}\label{thm:RRochP}
Let $G$ be a reductive group acting on a smooth quasi-projective
scheme $X$ over $k$. Assume one of the following.
\begin{enumerate}
\item
$X$ is projective.
\item
$X$ is a toric variety and $G$ is the dense torus of $X$.
\item
$G = \G_m$.
\end{enumerate}
Then the Riemann-Roch map $\tau^G_X$ is an isomorphism.
\end{thm}
\begin{proof}
By \cite[Corollary~1.3]{KrishnaV}, the Riemann-Roch map 
${\tau}^G_X$ has a factorization
\[
\widehat{K^G_i(X)} \to \widehat{\CH^*_G(X, i)}
\to \stackrel{\infty}{\underset{j= 0}{\prod}} {\CH^j_G(X, i)}
\]
and the first map is an isomorphism. Hence we only need to show that the
second map is also an isomorphism. By \cite[Proposition~3.2(3)]{KrishnaV},
we only need to show that $\CH^*_G(X, i)$ is generated as
$S(G)$-module by a (possibly infinite) set $S$ of homogeneous elements 
of bounded degree.

When $X$ is projective, \thmref{thm:Degn} shows that the map 
\[{\CH^*_G(X, i)}/{J_G \CH^*_G(X, i)}
\to \CH^*\left(X, i \right)
\]
is an isomorphism.
Since the latter is a graded module of bounded degree, 
say $d$, this isomorphism implies that
\[
\CH^*_G(X, i) \subset {\CH^*_G(X, i)}_{\le d}
+ J_G ({\CH^*_G(X, i)}_{\le d}) \subset S(G) 
({\CH^*_G(X, i)}_{\le d}).
\]

If $X$ is a smooth quasi-projective toric variety with dense torus $T$, it 
follows from \cite[Corollary~11.5]{Krishna1} that there is an $S(T)$-module 
isomorphism (with $\Q$-coefficients)
\[
\CH^*(k,i) \otimes_{\Q} \CH^*_T(X) \xrightarrow{\simeq} \CH^*_T(X,i).
\]
Since $\CH^*(k,i)$ is a graded $\Q$-module of bounded degree for all $i \ge 0$,
it follows from Corollary~\ref{cor:Chow0} 
(and the above argument for $X$ projective)
that the graded $S(T)$-module on the left of this isomorphism is
generated by homogeneous elements of bounded degree. It follows that the same
is true for $\CH^*_T(X,i)$

If $T$ is a rank one torus acting on any smooth scheme $X$, we apply
Corollary~\ref{cor:G-m-action} to conclude that $\CH^*_T(X,i) \otimes_{S(T)} \Q$
is an $S(T)$-module of bounded degree. We can now argue as in
the proof of case (1) to conclude that $\CH^*_T(X,i)$ is generated 
by homogeneous elements of bounded degree.
\end{proof}

\begin{remk}\label{remk:A-S-T}
Recall from \cite[\S~3]{Totaro} that if a linear algebraic group $G$ 
acts on a scheme $X$, then the Borel style equivariant algebraic 
$K$-theory of $X$ is defined by
\begin{equation}\label{eqn:topK}
K^{BS}_i(X_G) = \varprojlim K_i(X \stackrel{G} \times U),
\end{equation}
where the inverse limit is taken over the representations $V$ of $G$ such
that $(V,U)$ form good pairs. Using the non-equivariant Riemann-Roch for
the quotients $X\stackrel{G}{\times} U$ and the definition of 
$\CH^*_G(X,i)$, it can be easily checked that $K^{BS}_i(X_G)$ is
canonically isomorphic (over $\Q$) to 
$\stackrel{\infty}{\underset{j= 0}{\prod}} {\CH^j_G(X, i)}$.
Theorem~\ref{thm:RRochP} can then be rephrased as
saying that the natural map
\begin{equation}\label{eqn:A-S-T-0}
\wh{K^G_i(X)} \to K^{BS}_i(X_G)
\end{equation}
is an isomorphism for all $i \ge 0$ with rational coefficients.
 
This is an analogue of the Atiyah-Segal completion theorem \cite{AS}
for rational algebraic $K$-theory. For $X = \Spec(k)$ and $i =0$, this 
was shown earlier by Totaro \cite[Theorem~3.1]{Totaro}.
\end{remk}

\section{Equivariant $K$-theory with finite coefficients}
Theorem~\ref{thm:RRochP} shows that the completion of the rational
equivariant algebraic $K$-theory (with respect to the augmentation ideal)
of smooth schemes can be computed in terms of 
an equivariant algebraic cohomology, namely, the equivariant Chow groups.
We do not know if there is a Riemann-Roch type theorem for the
equivariant $K$-theory without going to this completion.
Our objective here is to give a partial answer to this question for
the equivariant $K$-theory with finite coefficients.
This answer is given in terms the equivariant Quillen-Lichtenbaum conjecture.
This conjecture provides an explicit relation between the 
algebraic and topological $K$-theories (with finite coefficients) of 
schemes over the field of complex numbers. We shall show that
the ordinary Quillen-Lichtenbaum conjecture also implies its
equivariant counterpart.

Let $G$ be a reductive group over $\C$ acting on a quasi-projective
$\C$-scheme $X$. We shall denote the analytic space $X(\C)$ by $X^{an}$.
Let us recall from \cite[\S~5.3]{Thomason1} (see also \cite{Segal}) that if 
$X$ is smooth, and if $M$ is a maximal compact subgroup of $G^{an}$,
the equivariant topological $K$-theory spectrum 
$K^M(X^{an})$ of $X^{an}$ is the spectrum of the classifying maps
of the equivariant topological vector bundles on $X^{an}$.
It is shown in \cite[\S~5.3]{Thomason1} that $K^M((-)^{an})$ 
extends to a Borel-Moore homology theory $G^M((-)^{an})$ 
on $\Sch^G_{\C}$. This homology theory satisfies the localization, 
homotopy invariance and Poincar{\'e} duality.
We shall denote the ordinary topological $K$-theory and $G$-theory of
$X^{an}$ by $K(X^{an})$ and $G(X^{an})$, respectively.

For a prime $p$, the equivariant topological
$G$-theory with coefficient ${\Z}/{p^{\nu}}$ is the smash product of
$G^{M}(X^{an})$ with the Moore spectrum 
${{\Sigma}^{\infty}}/{p^{\nu}}$. This will be denoted by 
$G^{M}(X^{an}; {\Z}/{p^{\nu}})$.     
The transition from the algebraic to the topological vector bundles
on smooth schemes induces a natural map 
${\rho}_X : G^G(X; {\Z}/{p^{\nu}}) \to
G^{M}(X^{an}; {\Z}/{p^{\nu}})$ for all quasi-projective schemes
(see \cite[Proposition~5.8]{Thomason1}). 
We wish to prove the following equivariant version of the 
Quillen-Lichtenbaum conjecture.

\begin{thm}\label{thm:QL}
Let $G$ be a reductive group acting on a smooth quasi-projective
scheme $X$ of dimension $d$ over $\C$. Let $M$ be a maximal compact 
subgroup of the Lie group $G^{an}$. Then the natural map
\[
{\rho}_X : K^G_i(X; {\Z}/{p^{\nu}}) \to
K^M_i(X^{an}; {\Z}/{p^{\nu}})
\]
is an isomorphism for $i \ge d-1$ and a monomorphism for $i = d-2$.
\end{thm}

\vskip .3cm

We write ${\Lambda} = {\Z}/{p^{\nu}}$ and $R(G; \Lambda) =
{{R(G)}/{p^{\nu}}}$ for the rest of this section. We need the following
result about the ordinary $K$-theory to prove the above theorem.

\begin{lem}\label{lem:SQL}
Let $X$ be a $\C$-scheme of dimension $d$. Then the natural map
\[
G_i(X; \Lambda) \to G_i(X^{an}; \Lambda)
\]
is an isomorphism for $i \ge d-1$ and a monomorphism for $i = d-2$.
\end{lem}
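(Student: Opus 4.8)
\textbf{Proof proposal for Lemma~\ref{lem:SQL}.}
The plan is to deduce the statement from the non-equivariant Quillen--Lichtenbaum conjecture (now a theorem, after the work of Voevodsky, Rost and others), which gives precisely such a range of isomorphisms for the mod-$p^{\nu}$ motivic-to-\'etale, equivalently algebraic-to-topological, comparison map on a smooth variety over $\C$ of dimension $d$. First I would reduce to the smooth case. If $X$ is singular, I embed $X$ as a closed subvariety of a smooth variety $Y$ over $\C$, let $U = Y - X$, and compare the localization fibration sequences for $G$-theory with finite coefficients
\[
G\left(X; \Lambda\right) \to G\left(Y; \Lambda\right) \to G\left(U; \Lambda\right)
\]
and its topological analogue (the latter being the very definition recalled in the text, $G^{top}(X;\Lambda) = \hofib\bigl(G^{top}(Y;\Lambda) \to G^{top}(U;\Lambda)\bigr)$). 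Since $Y$ and $U$ are smooth of dimension $d$, the comparison maps for $Y$ and $U$ are isomorphisms in degrees $\ge d-1$ and monomorphisms in degree $d-2$; a diagram chase on the long exact sequences of homotopy groups, using the five lemma in the appropriate range, then yields the same range for $X$. (One loses nothing here because $X$, $Y$, $U$ all have dimension $\le d$; one may need to be slightly careful that $\dim U$ could equal $d$, which is exactly why the bound is phrased in terms of $\dim X = d$ and not $\dim X - 1$.)

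For $X$ smooth of dimension $d$, I would invoke the identification $G_i(X;\Lambda) = K_i(X;\Lambda)$ and the comparison of algebraic $K$-theory with finite coefficients to topological $K$-theory. The cleanest route is via \'etale $K$-theory: by Thomason's descent theorem, $K^{et}_i(X;\Lambda)$ agrees with $K_i(X;\Lambda)$ after inverting the Bott element, and by the Quillen--Lichtenbaum conjecture the natural map $K_i(X;\Lambda) \to K^{et}_i(X;\Lambda)$ is an isomorphism for $i \ge \operatorname{cd}_p(X) - 1$ and injective for $i = \operatorname{cd}_p(X) - 2$. Over $\C$ a smooth variety of dimension $d$ has $p$-cohomological dimension $\le 2d$ for \'etale cohomology; however, using the comparison between \'etale cohomology with finite coefficients and the singular cohomology of $X(\C)$ together with the fact that $X(\C)$ is homotopy equivalent to a CW complex of real dimension $\le 2d$, and that by Artin vanishing the relevant motivic/\'etale spectral sequence has a vanishing line giving effective cohomological dimension $d$ for the purposes of $K$-theory, one gets the stated range $i \ge d-1$. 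Finally, $K^{et}_i(X;\Lambda) \cong K^{top}_i(X(\C);\Lambda)$ by comparing the \'etale and topological descent spectral sequences (both converging from $H^*$ with $\Lambda$-coefficients of the same homotopy type), which identifies the target with $G^{top}_i(X;\Lambda)$.

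The main obstacle, and the step deserving the most care, is pinning down the exact numerical range: getting $d-1$ rather than $2d-1$ requires the sharp form of Quillen--Lichtenbaum (the "$\operatorname{cd} - 1$" statement, not merely asymptotic agreement) together with the reduction of the effective cohomological dimension from $2d$ to $d$ via Artin vanishing for affine pieces and the slice/motivic spectral sequence. I would handle this by first proving the $i \ge d-1$ isomorphism and $i = d-2$ monomorphism for $X$ smooth \emph{affine} (where Artin vanishing gives $H^q = 0$ for $q > d$ and the descent spectral sequence collapses appropriately), and then bootstrapping to general smooth $X$ by a Mayer--Vietoris/Zariski-descent induction on the number of affines in an open cover, at each stage applying the five lemma in the stable range; the dimension bound is preserved because each pairwise intersection is again smooth of dimension $\le d$. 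This reduces everything to the affine case plus the cited input theorems, and no genuinely new computation is required beyond careful bookkeeping of indices.
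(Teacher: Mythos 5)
Your reduction from the singular case to the smooth case has a genuine gap. You propose to embed a singular $X$ of dimension $d$ as a closed subvariety of a smooth $Y$, set $U = Y - X$, and transfer the Quillen--Lichtenbaum range from $Y$ and $U$ to $X$ via the five lemma on the localization sequences. But in general a singular variety of dimension $d$ cannot be embedded in a smooth variety of dimension $d$; the smooth ambient $Y$ (e.g.\ affine or projective space) will typically have $\dim Y = N \gg d$, and then $U = Y - X$ also has dimension $N$. Your parenthetical ``$X$, $Y$, $U$ all have dimension $\le d$'' is simply false. Applying the smooth case to $Y$ and $U$ then only gives the comparison for $X$ in the much worse range $i \ge N - 1$, not $i \ge d - 1$. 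The localization sequence points the wrong way for this devissage: $X$ sits at the start of $G(X) \to G(Y) \to G(U)$, so the range one extracts for $X$ is governed by the (large) dimension of $Y$ and $U$, not by $d$.

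The paper's reduction avoids this entirely by working \emph{inside} $X$ rather than embedding $X$ into something bigger. It first reduces to $X$ reduced, then inducts: on the number of irreducible components (peeling off components via the localization sequence for a closed $Z$ with fewer components and its open complement), and on dimension (for $X$ irreducible, taking a dense smooth open $U \subset X$ of dimension $d$ and its closed complement $Z$ of dimension $< d$, so that induction applies to $Z$ and the already-known smooth case applies to $U$). In each such localization sequence all terms have dimension $\le d$, so the five-lemma range is preserved. This is the standard devissage strategy and is the piece your argument is missing. On the smooth case itself you are roughly reconstructing the content of Suslin's theorem as presented in Pedrini--Weibel (which the paper simply cites as \cite[Theorem~4.1]{PW}); your sketch of getting the range $d-1$ rather than $2d-1$ via Artin vanishing is pointing at the right ideas, but it is not a substitute for the cited result and would require considerably more care to be a proof.
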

\begin{proof} 
We prove the lemma by induction on $\dim(X)$.
First suppose that $X$ is smooth. It was shown by Suslin 
\cite[Theorem~4.1]{PW} in this case that the lemma 
follows from the Bloch-Kato conjecture, established  by 
Voevodsky and Rost \cite{Voev} .
If $X$ is singular, the isomorphism $G_*(X) \simeq G_*(X_{\rm red})$ 
allows one to assume $X$ to be reduced. 

Assume that the lemma holds for schemes of dimension 
less than $d$. If $X = X_1 \cup \cdots \cup X_r$ is a union of its irreducible
components, we let $Z = X_2 \cup \cdots \cup X_r$ and $U = X \setminus Z$. 
Then $U$ is irreducible and $Z$ has smaller number of irreducible components.
Now, the localization sequences (use \cite[\S~1, 5.3]{Thomason1}
with $G = \{1\}$) 
\begin{equation}\label{eqn:OQL}
\xymatrix@C.5pc{
G_{i+1}(U, \Lambda) \ar[r] \ar[d] &
G_i(Z, \Lambda) \ar[r] \ar[d] & 
G_i(X, \Lambda) \ar[r] \ar[d] &
G_i(U, \Lambda) \ar[r] \ar[d] &
G_{i-1}(Z, \Lambda) \ar[d] \\
G_{i+1}(U^{an}, \Lambda) \ar[r] &
G_i(Z^{an}, \Lambda) \ar[r] & 
G_i(X^{an}, \Lambda) \ar[r] &
G_i(U^{an}, \Lambda) \ar[r] &
G_{i-1}(Z^{an}, \Lambda)}
\end{equation}
of the algebraic and topological $G$-theories and an induction on the
number of irreducible components reduces the problem to the case when
$X$ is irreducible. We alert the reader here that the case $i = d-1,
d-2$ needs a more careful use of snake lemma in the above diagram but
causes no trouble. 

If $X$ is irreducible, there exists a dense open subset $U \subset X$ which
is smooth and hence the complement $Z = X \setminus U$ has dimension smaller
than $d$. The diagram ~\eqref{eqn:OQL} above and induction on dimension
together with the smooth case finish the proof.
\end{proof}

We recall the following construction before we prove the next result.
If $G$ is a linear algebraic group acting trivially on a scheme $X$,
let ${Coh}_X$ (resp. ${Coh}^G_X$) denote the abelian category of 
coherent (resp. $G$-equivariant coherent) ${\sO}_X$-modules. Then there is a
natural exact functor ${Coh}_X \to {Coh}^G_X$ by letting $G$ act trivially
on a coherent sheaf $\sF$. This gives a natural map of spectra
$G_*(X) \xrightarrow{f} G^G_*(X)$. Since $G^G_*(X)$ is a module spectrum
over the representation ring $R(G)$, we obtain a natural map
\begin{equation}\label{eqn:TR}
G_*(X) {\otimes}_{\Z} R(G) \to G^G_*(X)
\end{equation}
\[
{\alpha} {\otimes} {\rho} \mapsto {\rho} \cdot f(\alpha).
\]
We also have a similar map on the $K$-theory with finite coefficients.

\begin{lem}\label{lem:simple}
Let $T$ be a diagonalizable group acting trivially on a $\C$-scheme $X$. 
Then the maps
\begin{equation}\label{eqn:QL0}
G_i(X; \Lambda) {\otimes}_{\Lambda} R(T; \Lambda)
\to  G^T_i(X; \Lambda), \ {\rm and} 
\end{equation}
\[
G_i(X^{an}; \Lambda) {\otimes}_{\Lambda} R(T; \Lambda)
\to G^T_i(X^{an}; \Lambda) 
\]
are isomorphisms of $R(T; \Lambda)$-modules.
\end{lem}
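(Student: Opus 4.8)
The plan is to reduce the statement about a diagonalizable group $T$ to the case of a torus and then to the case of $\mathbb{G}_m$, where the representation ring is $\mathbb{Z}[t,t^{-1}]$ and everything can be computed by hand. First I would recall that a diagonalizable group over $\C$ is a product of a torus and a finite abelian group, or more uniformly, that $R(T)$ is a Laurent polynomial ring $\mathbb{Z}[N]$ on the character lattice $N$ (possibly with torsion if $T$ has a finite part), and that $R(T;\Lambda) = \Lambda[N]$ is free as a $\Lambda$-module. Since both sides of each map in \eqref{eqn:QL0} are additive in $X$ along localization sequences and both sides commute with filtered colimits, and since the map in question is $R(T;\Lambda)$-linear and natural, the strategy is a d\'evissage: pick a character $\chi$ of $T$, let $S = \ker(\chi) \subset T$, so that $T/S \cong \mathbb{G}_m$ (or a finite cyclic group), and use the fact that for the \emph{trivial} action, equivariant $G$-theory for $T$ can be analyzed via the subgroup $S$ together with the $\mathbb{G}_m$-action on the one remaining weight direction.

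The key computational input is the case $T = \mathbb{G}_m$ acting trivially on $X$. Here $X_T = X \times BT$, and using a good pair $(V,U)$ with $U/\mathbb{G}_m = \mathbb{P}^n$, one has $X_T$ approximated by $X \times \mathbb{P}^n$; the projective bundle formula for $G$-theory (resp. for topological $K$-theory of $X \times \mathbb{CP}^n$) gives $G_i(X\times\mathbb{P}^n) \cong \bigoplus_{j=0}^{n} G_i(X)$, and passing to the appropriate limit over $n$ yields $G^{\mathbb{G}_m}_i(X;\Lambda) \cong G_i(X;\Lambda)\otimes_\Lambda \Lambda[t,t^{-1}] = G_i(X;\Lambda)\otimes_\Lambda R(\mathbb{G}_m;\Lambda)$, compatibly with the map \eqref{eqn:TR}; this is exactly the analogue of Lemma~\ref{lem:trivialT} for $K$-theory, and is essentially Thomason's computation of equivariant $K$-theory of a point tensored with $X$. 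The identical argument with $\mathbb{CP}^\infty = BS^1$ in place of the algebraic approximations, using Bott periodicity and the Atiyah--Segal completion theorem only insofar as one needs $K^{top}_*(X\times\mathbb{CP}^n)$, handles the topological statement; for the finite part of $T$ one uses instead that $R$ of a finite abelian group is a finite free $\mathbb{Z}$-module realized by the (trivial) summands and that equivariant $G$-theory for a finite group acting trivially splits as a sum indexed by irreducibles.

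Having both base cases, I would run the induction on the rank (and order of the finite part) of $T$: write $T \cong T' \times T''$ with $T''$ of smaller rank/order, observe that $G^T_i(X;\Lambda) \cong G^{T''}_i(X;\Lambda) \otimes$-type statements follow by applying the already-known case for $T'$ to the $T''$-variety $X$ (with trivial $T''$-action still), i.e.\ iterate the one-factor computation and use that tensoring over $\Lambda$ with the free $\Lambda$-module $R(T';\Lambda)$ is exact and commutes with the relevant colimits and localization sequences. The multiplicativity (``isomorphisms of $\Lambda$-algebras'') is then automatic, since all the maps in sight are induced by the exact monoidal functor $\mathrm{Coh}_X \to \mathrm{Coh}^T_X$ and the $R(T)$-module structure, so the ring structures match on the nose once the underlying $\Lambda$-module isomorphism is established.

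The main obstacle, I expect, is not the torus case — that is a clean projective-bundle-formula plus limit argument — but rather making the passage to the \emph{limit} over good pairs precise on the topological side and ensuring the algebraic and topological limits are taken compatibly, so that the comparison map $\rho_X$ (which is what Lemma~\ref{lem:simple} will feed into for the proof of Proposition~\ref{prop:QL}) really does sit in a commutative square; concretely, one must check that $K^{top}_*(X \times \mathbb{CP}^n)$ stabilizes in each degree as $n \to \infty$ in the same range in which the algebraic good-pair approximations do, which follows from the cellular structure of $\mathbb{CP}^n$ but needs to be stated carefully. A secondary nuisance is the torsion in $R(T)$ when $T$ has a nontrivial finite part, where $R(T;\Lambda)$ may fail to be free over $\Lambda$ if $p$ divides the order of that finite part; in that case one reduces to $T$ a genuine torus by first splitting off the finite cyclic factors of order coprime to $p$ (harmless) and noting that factors of order divisible by $p$ still act trivially, so $G^{\mathbb{Z}/p^a}_i(X;\Lambda) \cong G_i(X;\Lambda)\otimes_\Lambda \Lambda[\mathbb{Z}/p^a]$ by the same trivial-action splitting into a direct sum over characters, which remains valid with $\Lambda$-coefficients.
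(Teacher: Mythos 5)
Your proposed route does not match the paper's and, more importantly, it has a genuine gap on the algebraic side. You propose to compute $G^{\G_m}_i(X;\Lambda)$ for the trivial action via good pairs and projective bundle formula on $X\times \P^n$, "passing to the appropriate limit over $n$." But equivariant $G$-theory is not defined via the Borel construction; it is the $K$-theory of the abelian category of $T$-equivariant coherent sheaves. The good-pair/Borel approximations compute the \emph{topological} algebraic $K$-theory $K^{top}_i(X_G)$ in the sense of \eqref{eqn:topK}, which is a \emph{completion} of $K^G_i(X)$ (that this completion map is an isomorphism is precisely the content of Theorem~\ref{thm:RRochP}/Corollary~\ref{cor:ATS}, not something one may presuppose). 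Concretely, the limit of $G_i(X\times\P^n)\cong\bigoplus_{j=0}^n G_i(X)$ over $n$ is a countable product, not $G_i(X)\otimes \Z[t,t^{-1}]$ (which is a direct sum over $\Z$, including negative weights that $\P^n$-approximations never see). So the base case of your induction, as described, computes the wrong object.

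The correct and much shorter mechanism, which is what the paper uses, is: for $T$ diagonalizable acting trivially, the category $\mathrm{Coh}^T_X$ decomposes by weight as $\bigoplus_{\chi\in\widehat T}\mathrm{Coh}_X$, giving the integral isomorphism $G_i(X)\otimes_{\Z}R(T)\xrightarrow{\cong}G^T_i(X)$ on the nose (this is Thomason's Lemma~5.6 in \cite{Thomason3}); the corresponding topological statement is Segal's Proposition~2.2 in \cite{Segal}. With these integral facts in hand, the finite-coefficient version is a straightforward universal coefficient argument: compare the universal coefficient short exact sequences for $G_i(X;\Lambda)\otimes_\Lambda R(T;\Lambda)$ and $G^T_i(X;\Lambda)$, use flatness of $R(T)$ over $\Z$ to commute $\mathrm{Tor}^1_{\Z}(-,\Lambda)$ past $\otimes_{\Z}R(T)$, and conclude by the five lemma. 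If you want to salvage your argument, drop the good-pair/$\P^n$-approximation picture entirely and replace the base case by the weight-decomposition of $\mathrm{Coh}^T_X$; the induction on rank and the $\Lambda$-algebra compatibility you describe then do go through, but at that point you have essentially reproved Thomason's lemma rather than cited it, so you gain nothing over the one-line universal-coefficient argument.
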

\begin{proof} We consider the following commutative diagram of short exact 
sequences of
$\Lambda$-modules.
\begin{equation}\label{eqn:QL1}
\xymatrix@C.5pc{
0 \ar[r] & {{G_i(X)}/{p^{\nu}}} {\otimes}_{\Lambda} R(T; \Lambda) \ar[r]
\ar[d] & G_i(X; \Lambda) {\otimes}_{\Lambda} R(T; \Lambda)
\ar[d] \ar[r] & {\rm Tor}^1_{\Z} (G_{i-1}(X), \Lambda) 
{\otimes}_{\Lambda} R(T; \Lambda) \ar[r]
\ar[d] & 0 \\
0 \ar[r] & {{G^T_i(X)}/{p^{\nu}}} \ar[r] & 
G^T_i(X; \Lambda) \ar[r] &
{\rm Tor}^1_{\Z} (G^T_{i-1}(X), \Lambda) \ar[r] & 0.}
\end{equation}

The top row is obtained by tensoring the non-equivariant version of the
bottom row with $R(T)$. This row is then exact since $R(T)$ is a flat 
$\Z$-module.
The left vertical arrow is an isomorphism by \cite[Lemma~5.6]{Thomason3}.
Moreover, the flatness of $R(T)$ also implies that the last term on the
bottom row is the same as ${\rm Tor}^1_{\Z} 
(G_{i-1}(X) {\otimes}_{\Z} R(T), \Lambda)
\simeq {\rm Tor}^1_{\Z} (G_{i-1}(X), \Lambda)
{\otimes}_{\Z} R(T)$. In particular, the right vertical arrow is an 
isomorphism and hence so is the middle vertical map.
The second isomorphism in ~\eqref{eqn:QL0} follows exactly in the same way
once we know that the left vertical map in the topological version of
the diagram ~\eqref{eqn:QL1} is an isomorphism. But this follows from
\cite[Proposition~2.2]{Segal}. 
\end{proof}

\vskip .3cm

{\sl{Proof of \thmref{thm:QL}:}}
We in fact prove the more general statement that for any $\C$-scheme
$X$ of dimension $d$ with $G$-action, the map
\begin{equation}\label{eqn:SQL*}
{\rho}_X : G^G_i(X; {\Z}/{p^{\nu}}) \to
G^M_i(X^{an}; {\Z}/{p^{\nu}})
\end{equation}
is an isomorphism for $i \ge d-1$ and a monomorphism for $i = d-2$.

Following an idea of Thomason, we prove it by reducing  to the case of 
torus. So we first assume that $G = T$ is a torus which acts trivially on $X$. 
In this case, the proposition follows directly from Lemmas ~\ref{lem:SQL}
and~\ref{lem:simple} using the additional known fact that $R(T) \simeq R(M)$.

If $T$ does not act trivially on $X$, we prove the proposition by 
induction on $\dim(X)$. We use Thomason's 
generic slice theorem \cite[Proposition~2.4]{Thomason1}
to get a dense open $T$-invariant open set $U \subset X$ and a
diagonalizable subgroup $T_1 \subset T$ with quotient $T_2 = T/{T_1}$ such 
that $T$ acts on $U$ via $T_2$, which in turn acts freely on $U$ such that
$U/T$ is a geometric quotient and there is a $T$-equivariant isomorphism
\[
U \xrightarrow{\simeq} (U/T) \times T_2 \xrightarrow{\simeq}
(U/T) \stackrel{T_1} \times T.
\]
In particular, we have $G^T(U ; \Lambda)
\simeq G^T(U/T \stackrel{T_1} \times T ; \Lambda) 
\simeq G^{T_1}(U/T ; \Lambda)$,
where the last homotopy equivalence follows from the Morita equivalence
\cite[Theorem~1.10]{Thomason1}.

The topological version of the Morita equivalence follows from
\cite[Proposition~2.1]{Segal} and the example preceding this in
\cite{Segal} (see also \cite[\S~5.3]{Thomason1}). 
In particular, we get $G^M(U^{an} ; \Lambda)
\simeq G^{M_1}((U/T)^{an} ; \Lambda)$.
Since $T_1$ acts trivially on $U/T$, the theorem holds for the
$T_1$-action on $U/T$. Since ${\rm dim}(U) \ge {\rm dim}(U/T)$,
we conclude from the above Morita isomorphisms that the theorem
holds also for the $T$-action on $U$.

Now we set $Z = X \setminus U$, use the diagram ~\eqref{eqn:OQL} of 
localization sequences and the induction and argue as before to conclude the
proof of the theorem when $G$ is a torus.  

To prove general case, choose a maximal torus $T$ of $G$ and a Borel 
subgroup $B$ of $G$ containing $T$.
Choose $M$ as a maximal compact subgroup of $G$ containing a maximal compact 
subgroup $M'$ of $T$. Then the compactness of $G/B$ implies that
$G/B \simeq M/{M'}$.
We thus have the maps of spectra 
\[
G^G(X ; \Lambda) \to G^T(X ; \Lambda)
\xrightarrow{\simeq} G^G(G/B \times X ; \Lambda)
\to G^G(X ; \Lambda)
\]
\[
G^M(X^{an} ; \Lambda) \to 
G^{M'}(X^{an} ; \Lambda)
\xrightarrow{\simeq} G^{M}(M/{M'} \times X^{an} ; \Lambda)
\to G^M(X^{an} ; \Lambda)
\]
such that they are compatible under ${\rho}_X$ and both the composite
maps are identity (see \cite[Proof of Theorem~5.9]{Thomason1}). 
The theorem now follows from the case of torus.
$\hfill \square$

\vskip .4cm

\noindent\emph{Acknowledgements.} 
The author would like to thank the referee for very carefully reading the
paper and providing many suggestions to improve its contents and 
presentation.

\enlargethispage{20pt}

\end{document}